\definecolor{uququq}{rgb}{0.25,0.25,0.25}
\definecolor{qqqqff}{rgb}{0,0,1}
\newtheorem{theorem}{Theorem}
\newtheorem{lemma}{Lemma}
\newtheorem{remark}{Remark}
\theoremstyle{definition}
\newtheorem{definition}{Definition}
\begin{document}
\begin{center}
    \Large
    \textbf{On the Distance Spectra of Extended Double Stars}
        
    \vspace{0.4cm}
    \large
    Anuj Sakarda, Jerry Tan, Armaan Tipirneni
    \vspace{0.9cm}
    
    \textbf{Abstract}
\end{center}
The distance matrix of a connected graph is defined as the matrix in which the entries are the pairwise distances between vertices. The distance spectrum of a graph is the set of eigenvalues of its distance matrix. A graph is said to be determined by its distance spectrum if there does not exist a non-isomorphic graph with the same spectrum. The question of which graphs are determined by their spectrum has been raised in the past, but it remains largely unresolved. In this paper, we prove that extended double stars are determined by their distance spectra.

\tableofcontents
\begin{center}
    \large
    \textbf{Keywords}
\end{center}
Distance spectra, distance matrix, spectral graph theory, cospectral graph, determined by spectrum, graph theory, interlacing.

\section{Introduction}
\subsection{Background}
Graphs in this paper are assumed to be undirected, simple and connected. For a graph $G$, we denote its vertex set $V(G)$. The distance between vertices $u$ and $v$ of a graph $G$ is denoted by $d_{uv}$. The diameter of $G$, denoted by $d(G)$, is the maximum distance between any pair of vertices of $G$. For $S \subseteq V(G)$, we denote by $G[S]$ the subgraph of $G$ induced by the vertices in $S$, which is the subgraph of $G$ whose vertex set is $S$ and whose edge set consists of all edges of $G$ that have both ends in $S.$

The distance matrix of a graph is defined as follows:

\begin{definition}[Distance Matrix]
The \emph{distance matrix} $D(G)$ of a connected graph $G$ with vertices $v_1, v_2, \dots, v_n$ is the $n \times n$ matrix with $ij-$entry $d_{ij}$, where $d_{ij}$ is the distance between $v_i$ and $v_j$.
\end{definition}

We refer to the \textit{distance spectrum} of a graph as the set of eigenvalues of its distance matrix. Further, we index the eigenvalues of $D(G)$ as $\lambda_1(D)\geq \lambda_2(D)\geq \dots \geq \lambda_n(D).$ We define the \textit{distance characteristic polynomial} of $G$ as $\det(D(G)-\lambda I),$ where $I$ is the identity matrix.

Two graphs are \textit{cospectral} if they have the same distance spectrum, and We refer to a graph as determined by its distance spectrum if there exists no non-isomorphic graph cospectral to it. 

\begin{definition}[Extended Double Stars]
Let $a,b$ be non-negative integers. As defined in \cite{xue}, a double star, denoted by $S(a,b)$, is a graph formed by joining the centers of the complete bipartite graphs $K_{1,a}$ and $K_{1,b}$ via an edge. An \emph{extended double star}, denoted by $T(a,b),$ is the graph formed by joining the centers of $K_{1,a}$ and $K_{1,b}$ to a common vertex.
\end{definition}

Note that $T(a,b)$ has $a+b+3$ vertices. This paper explores whether the graphs $T(a,b)$ are determined by their distance spectra. 

\subsection{Previous Work and Our Results}

The question of which graphs are determined by their distance spectra has been raised in \cite{vandam}, though this question is largely open.

Previous work in \cite{xue} has proven that $S(a,b)$ is determined by its distance spectrum which provided the inspiration for this paper. Other work in \cite{lin} includes proving that $K_n$ and the complete split graph are determined by their distance spectra. In addition, in \cite{jin} and \cite{lu}, complete $k$-partite graphs, graphs with exactly two distance eigenvalues not equal to $-1$ or $-3$, and the friendship graph have been proven to be determined by their distance spectra. Very recently, a theorem was proven in \cite{zhang} that determines which graphs with at most 3 distance eigenvalues not equal to $-1$ or $-2$ are determined by their distance spectra. The distance spectra of the cycle graphs $C_n$ have been determined in \cite{atik} and \cite{fowler}. The majority of the results surrounding the spectra of graphs, however, have focused on non-distance matrices of a graph such as the adjacency matrix, the Laplacian matrix, the signless Laplacian matrix, and generalized adjacency matrices, which take the form $xI+yJ+zA$ for $x,y,z\in\mathbb{R},z\neq0,$ where $J$ is the matrix of all $1$s, $I$ is the identity matrix, and $A$ is the adjacency matrix. Graphs of diameter $2$ follow the property that $D = 2J - 2I - A$, so the distance matrix is a generalized adjacency matrix, which is studied in \cite{koolen}. 

This paper investigates a family of graphs of diameter $4$, $T(a,b),$ and proves it is determined by its distance spectrum. Our proof is split into cases based on diameter and show that no non-isomorphic graphs have the same distance spectrum as $T(a,b)$. 

\section{Preliminaries}

We introduce the following theorem derived from the Cauchy Interlace Theorem, which is also presented in \cite{vandam}. This theorem functions as an essential foundation of the proof presented in this paper, as it is used to prove key tools later.

\begin{theorem}
\label{interlacing}
Let G be a graph with $n$ vertices and distance matrix $D(G)$. Denote the $n$ eigenvalues as $\lambda_1(D(G)), \lambda_2(D(G)), \dots , \lambda_n(D(G))$ where $\lambda_1(D(G))$ $\geq \lambda_2(D(G)) \geq \dots \geq \lambda_n(D(G))$. Let $H$ be an induced subgraph of $G$ with $m$ vertices with distance spectrum $\mu_1(D(H))$ $\geq \mu_2(D(H)) \dots \geq \mu_m(D(H))$. If $D(H)$ is a principal submatrix of $D(G)$, then $\lambda_{n-m+i}(D(G)) \leq \mu_i(D(H)) \leq \lambda_i(D(G))$ for $i = 1, 2, \dots, m$.
\end{theorem}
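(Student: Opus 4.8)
The plan is to recognize this statement as a direct instance of the Cauchy Interlace Theorem, so the bulk of the work is verifying that its hypotheses hold in our setting and then reproducing the Courant–Fischer argument that underlies it. First I would observe that every distance matrix is real and symmetric: since $d_{uv} = d_{vu}$ for any pair of vertices, $D(G)$ is a real symmetric $n \times n$ matrix and $D(H)$ is a real symmetric $m \times m$ matrix. Consequently both matrices have real eigenvalues, so the orderings $\lambda_1 \geq \dots \geq \lambda_n$ and $\mu_1 \geq \dots \geq \mu_m$ are well defined. The hypothesis that $D(H)$ is a principal submatrix of $D(G)$ means there is a subset $T \subseteq V(G)$ with $|T| = m$ such that $D(H)$ is exactly the restriction of $D(G)$ to the rows and columns indexed by $T$.

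The main tool I would invoke is the Courant–Fischer min–max characterization of the eigenvalues of a symmetric matrix: for $A = D(G)$,
$$\lambda_i(A) = \max_{\dim S = i} \; \min_{\substack{x \in S \\ x \neq 0}} \frac{x^\top A x}{x^\top x},$$
together with the dual formula $\lambda_i(A) = \min_{\dim S = n-i+1} \max_{x \in S, x \neq 0} (x^\top A x)/(x^\top x)$. The key bridge between $D(G)$ and $D(H)$ is an embedding: any vector $y \in \mathbb{R}^m$ lifts to a vector $\tilde{y} \in \mathbb{R}^n$ by placing its entries in the coordinates indexed by $T$ and zeros elsewhere. Because $D(H)$ is the principal submatrix of $D(G)$ on $T$, this lift satisfies $\tilde{y}^\top D(G)\, \tilde{y} = y^\top D(H)\, y$ and $\tilde{y}^\top \tilde{y} = y^\top y$, so the two Rayleigh quotients agree.

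To establish the upper bound $\mu_i(D(H)) \leq \lambda_i(D(G))$, I would take an $i$-dimensional subspace $S \subseteq \mathbb{R}^m$ attaining the maximum in the Courant–Fischer formula for $\mu_i(D(H))$; its image under the lift is an $i$-dimensional subspace of $\mathbb{R}^n$ on which the Rayleigh quotient of $D(G)$ coincides with that of $D(H)$ on $S$, and feeding this subspace into the max–min formula for $\lambda_i(D(G))$ yields the inequality. For the lower bound $\lambda_{n-m+i}(D(G)) \leq \mu_i(D(H))$, I would apply the same argument to the negated matrices $-D(G)$ and $-D(H)$, whose eigenvalue orderings are reversed, converting the upper bound into the desired lower bound after re-indexing.

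The step I expect to be the most delicate is the index bookkeeping in the lower bound: negating the matrices sends $\mu_i$ to $-\mu_{m-i+1}$ and $\lambda_i$ to $-\lambda_{n-i+1}$, so after applying the upper bound to the negated pair one obtains $\lambda_{n-j+1} \leq \mu_{m-j+1}$, and the substitution $i = m-j+1$ must be checked to recover exactly $\lambda_{n-m+i}(D(G)) \leq \mu_i(D(H))$ rather than an off-by-one shift. Beyond this, the argument is entirely routine, since no special structure of the distance matrix is needed past its symmetry.
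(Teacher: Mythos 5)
Your proof is correct. There is, however, nothing in the paper to compare it against: the paper does not prove this theorem, but imports it as a known consequence of the Cauchy Interlace Theorem with a citation to van Dam and Haemers. What you have supplied is the standard self-contained proof, specialized to distance matrices: symmetry of $D(G)$ and $D(H)$, the zero-padding lift that makes Rayleigh quotients of $D(H)$ agree with those of $D(G)$ on the lifted subspace, the Courant--Fischer max--min formula for the upper bound $\mu_i(D(H)) \leq \lambda_i(D(G))$, and negation plus re-indexing for the lower bound. Your bookkeeping in the negation step is exactly right: applying the upper bound to $-D(G)$ and $-D(H)$ gives $-\mu_{m-j+1}(D(H)) \leq -\lambda_{n-j+1}(D(G))$, and substituting $i = m-j+1$ yields $\lambda_{n-m+i}(D(G)) \leq \mu_i(D(H))$ with no off-by-one error. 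You also correctly identify the one hypothesis that is specific to this setting and easy to overlook: for an induced subgraph $H$, the matrix $D(H)$ is generally \emph{not} a principal submatrix of $D(G)$ (deleting vertices can lengthen distances), which is exactly why the theorem states that hypothesis explicitly and why your argument uses it and nothing more. The trade-off between the two treatments is the usual one: the paper's citation keeps the exposition brief, while your argument makes the result self-contained at the cost of reproducing a classical proof.
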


\begin{lemma}
\label{double star polynomial}
Let $G=T(a,b)$. Then $G$ has distance characteristic polynomial 
$$(-\lambda-2)^{a+b-2}p_{a,b}(\lambda),$$

\noindent where $p_{a,b}(\lambda)=16 + 8a + 8b + 40\lambda + 36a\lambda + 36b\lambda + 8ab\lambda + 28\lambda^2 + 44 a\lambda^2 + 44b\lambda^2 + 24 ab\lambda^2 + 2 \lambda^3 + 18 a\lambda^3 + 18 b\lambda^3 +   12 ab\lambda^3 - 4 \lambda^4 + 2 a\lambda^4 + 2 b\lambda^4 - \lambda^5.$ When $a$ and $b$ are clear from context we simply use $p(\lambda)=p_{a,b}(\lambda).$
\end{lemma}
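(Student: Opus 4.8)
The plan is to compute the distance characteristic polynomial of $T(a,b)$ directly by exploiting the enormous symmetry of the graph. The graph $T(a,b)$ has $a+b+3$ vertices: a central vertex $c$, two inner vertices $u$ and $w$ adjacent to $c$, the $a$ leaves hanging off $u$, and the $b$ leaves hanging off $w$. The key observation is that all $a$ leaves attached to $u$ are twins in the sense that each such leaf is at the same distance from every other vertex of the graph (distance $1$ to $u$, distance $2$ to $c$, distance $3$ to $w$, distance $4$ to the leaves on $w$'s side, and distance $2$ to its sibling leaves). The same holds for the $b$ leaves on $w$. This twin structure is exactly what produces the factor $(-\lambda-2)^{a+b-2}$: for any two twin leaves, the corresponding rows of $D(G)$ differ only in the two mutually-swapped entries, so the vector that is $+1$ on one twin, $-1$ on another, and $0$ elsewhere is an eigenvector.

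First I would order the vertices as $c, u, w$, then the $a$ leaves, then the $b$ leaves, and write down the $(a+b+3)\times(a+b+3)$ distance matrix $D(G)$ in explicit block form, with the leaf-blocks having the form $2(J-I)$ on the diagonal and constant off-diagonal blocks. The next step is to identify the eigenvectors supported on the leaves. For each pair of sibling leaves on $u$'s side I get an eigenvector of the form $e_i - e_j$ (difference of two standard basis vectors), and since within the $a\times a$ block the submatrix is $2(J-I)$, such a difference vector is an eigenvector with eigenvalue $-2$; this gives $a-1$ independent eigenvectors with eigenvalue $-2$. Symmetrically, the $b$ leaves on $w$'s side contribute $b-1$ more eigenvectors with eigenvalue $-2$. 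Together these account for the full $(-\lambda-2)^{a+b-2}$ factor.

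The remaining $5$ eigenvalues come from the action of $D(G)$ on the $5$-dimensional space spanned by the "symmetric" vectors: the indicator of $c$, of $u$, of $w$, of all the $u$-leaves summed, and of all the $w$-leaves summed. On this quotient the matrix reduces to a $5\times 5$ matrix whose entries depend on $a$ and $b$ (the leaf-entries get weighted by the number of leaves being collapsed). I would compute the characteristic polynomial of this $5\times 5$ reduced matrix, which is exactly $p_{a,b}(\lambda)$ up to sign; equivalently, using equitable-partition or quotient-matrix machinery, the characteristic polynomial of the quotient matrix under the orbit partition $\{c\},\{u\},\{w\},\{u\text{-leaves}\},\{w\text{-leaves}\}$ gives the degree-$5$ factor. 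One must be careful with the normalization of the quotient matrix (the row-sum / weighted form) so that its eigenvalues are genuinely eigenvalues of $D(G)$ rather than eigenvalues of a similar-but-rescaled matrix.

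The main obstacle I expect is purely computational: verifying that the degree-$5$ factor matches the stated $p_{a,b}(\lambda)$ coefficient-by-coefficient. This is a determinant of a $5\times 5$ matrix with symbolic entries in $a,b,\lambda$, and expanding it by hand is error-prone, so I would organize the computation by cofactor expansion along the row/column corresponding to $c$ (which has the sparsest structure), and I would sanity-check the result by testing small cases such as $T(0,0)$ (the path $P_3$) and $T(1,1)$ against directly-computed distance spectra, and by confirming that the product $(-\lambda-2)^{a+b-2}\,p_{a,b}(\lambda)$ has the correct leading term $(-\lambda)^{a+b+3}$ and the correct trace and determinant of $D(G)$.
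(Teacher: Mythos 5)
Your proposal is correct and is essentially the paper's own argument: both exploit the leaf symmetry of $T(a,b)$ to split off the factor $(-\lambda-2)^{a+b-2}$ and reduce the problem to a $5\times 5$ symbolic determinant in $a,b,\lambda$, and the quotient matrix you describe (under the partition $\{c\},\{u\},\{w\},\{u\text{-leaves}\},\{w\text{-leaves}\}$) is exactly the reduced $5\times 5$ matrix the paper obtains by factoring the block determinant. The only difference is presentational — you justify the reduction via twin eigenvectors and an invariant-subspace decomposition, while the paper performs it as a formal determinant manipulation — and your framing is, if anything, a cleaner explanation of why the $(-\lambda-2)$ factors appear.
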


\begin{proof}
Let $J$ denote the all-ones matrix.

Note that $T(a,b)$ has distance matrix $$D(G)=\begin{pmatrix} 0 & 1 & 2 & J_{1\times a} & 3J_{1\times b}  \\ 1 & 0 & 1 & 2J_{1\times a} & 2J_{1\times b} \\ 2 & 1 & 0 & 3J_{1\times a} & J_{1\times b} \\ J_{a\times 1} & 2J_{a\times 1} & 3J_{a\times 1} & 2J_{a\times a}-2I & 4J_{a\times b} \\ 3J_{b\times 1} & 2J_{b\times 1} & J_{b\times 1} & 4J_{b\times a} & 2J_{b\times b} - 2I \end{pmatrix}.$$

Then the characteristic polynomial is \begin{align*}\det(D(G)-\lambda I)&=\begin{vmatrix} -\lambda & 1 & 2 & J_{1\times a} & 3J_{1\times b}  \\ 1 & -\lambda & 1 & 2J_{1\times a} & 2J_{1\times b} \\ 2 & 1 & -\lambda & 3J_{1\times a} & J_{1\times b} \\ J_{a\times 1} & 2J_{a\times 1} & 3J_{a\times 1} & 2J_{a\times a}-(2+\lambda)I & 4J_{a\times b} \\ 3J_{b\times 1} & 2J_{b\times 1} & J_{b\times 1} & 4J_{b\times a} & 2J_{b\times b} - (2+\lambda)I \end{vmatrix}\\&=(-\lambda-2)^{a+b-2}\begin{vmatrix} -\lambda & 1 & 2 & a & 3b  \\ 1 & -\lambda & 1 & 2a & 2b \\ 2 & 1 & -\lambda & 3a & b \\ 1 & 2 & 3 & 2a-2-\lambda & 4b \\ 3 & 2 & 1 & 4a & 2b - 2 - \lambda \end{vmatrix} \\ &= (-\lambda-2)^{a+b-2} (16 + 8a + 8b + 40\lambda + 36a\lambda + 36b\lambda + 8ab\lambda + 28\lambda^2 + 44 a\lambda^2 + 44b\lambda^2 \\ &\quad+ 24 ab\lambda^2 + 2 \lambda^3 + 18 a\lambda^3 + 18 b\lambda^3 +   12 ab\lambda^3 - 4 \lambda^4 + 2 a\lambda^4 + 2 b\lambda^4 - \lambda^5).\end{align*}
\end{proof}

In particular, we find that $T(1,1)$ has distance spectrum $$\{8.2882, -0.5578, -0.7639, -1.7304, -5.2361\}.$$

If $a=b=0,$ then $T(a,b)=P_3$ is determined by its spectrum by \cite{xue}. Next, if $a>b=0$ then $T(a,b)=S(a,1),$ which is determined by its spectrum by \cite{xue}. A similar argument holds if $b>a=0.$

Then for $a,b \geq 1$, $D(T(1,1))$ is a principal submatrix of $D(T(a,b))$. Then by Theorem~\ref{interlacing},
\begin{align*}
    &\lambda_1(D(T(a,b)))\geq\lambda_1(D(T(1,1))=8.2882,\\
    &\lambda_2(D(T(a,b)))\geq\lambda_2(D(T(1,1))=-0.5578,\\
    &\lambda_3(D(T(a,b)))\geq\lambda_3(D(T(1,1))=-0.7639,\\
    &\lambda_4(D(T(a,b)))\geq\lambda_4(D(T(1,1))=-1.7304,\\
    &\lambda_n(D(T(a,b)))\leq\lambda_5(D(T(1,1))=-5.2361.
\end{align*}

Let $c=\max(a,b),$ so $c\geq1.$ Then $T(c,c)$ has polynomial 
\begin{align*}&(-\lambda-2)^{2c-2}(\lambda^5+(4c-4)\lambda^4+(12c^2+36c+2)\lambda^3+(24c^2+88c+28)\lambda^2+(8c^2+72c+40)\lambda+16c+16)\\&=(-\lambda-2)^{2c-2}(\lambda^2+(2c+4)\lambda+4)(-\lambda^3+6c\lambda^2+(12c+6)\lambda+4c+4).
\end{align*}

We denote $f(\lambda)=-\lambda^3+6c\lambda^2+(12c+6)\lambda+4c+4$ and $g(\lambda)=\lambda^2+(2c+4)\lambda+4$ for convenience.

For $c=1$, $\lambda=-1.7304, -0.5578, 8.2882$ are the zeros of $f(\lambda)$. Note that $f(-1.7304)=6(c-1)(-1.7304)^2+12(c-1)(-1.7304)+4(c-1)=1.20(c-1)>0$ for $c>1.$ 

Let $f'(\lambda)=6\lambda^2+12\lambda+4.$ Note $f'(\lambda)$ has zeros $\lambda=-1.5774, -0.4226.$ Since $f(\lambda)=cf'(\lambda)-\lambda^3+6\lambda+4,$ we have $f(-1.5774)=-(-1.5774)^3+6(-1.5774)+4<0.$ Thus, by the Intermediate Value Theorem, $f(\lambda)$ has a root in $[-1.7304, -1.5774)$ for $c \geq 1.$

By similar arguments, $f(\lambda)$ has zeros in the following intervals for $c\geq1$ : $$[-1.7304, -1.5774); [-0.5578, -0.4226); [8.2882, \infty).$$ 

Next, $g(\lambda)$ has zeros $-(c+2)\pm\sqrt{c^2+4c}$ which lie in $(-\infty, -5.2361]$ and $[-0.7639, 0)$ for $c\geq1.$

Clearly, $D(T(a,b))$ is a principal submatrix of $D(T(c,c))$, so by Theorem~\ref{interlacing},

\begin{align*}
    &\lambda_2(D(T(a,b)))\leq\lambda_2(D(T(c,c))<0,\\
    &\lambda_3(D(T(a,b)))\leq\lambda_3(D(T(c,c))<-0.4226,\\
    &\lambda_4(D(T(a,b)))\leq\lambda_4(D(T(c,c))<-1.5774.\\
\end{align*}

We can see that $-2,$ which must have multiplicity $a+b-2 = n-5,$ does not lie in any of the above intervals, so $\lambda_i=-2$ for $5\leq i \leq n-1.$ The complete spectrum for $T(a,b), a,b\geq 1$ is shown below. 

\begin{center}
\setlength\tabcolsep{3pt}
\begin{tabular}{ |c|c|c|c|c|c|c|c|c|c|c| } 
 \hline
  $\lambda_1$ & $\lambda_2$ & $\lambda_3$ & $\lambda_4$ & $\lambda_5$ & $\cdots$ & $\lambda_{n-1}$ & $\lambda_n$\\ 
 \hline
 $[8.2882, \infty)$ & $[-0.5578,0)$ & $[-0.7639, -0.4226)$ & $[-1.7304,-1.5774)$ & -2 & $\cdots$ & -2 & $(-\infty,-5.2361]$\\ 
 \hline
\end{tabular}.
\end{center}

\textbf{Note:} From this point on we denote $G$ as some graph cospectral to $T(a,b)$, assuming such a graph exists.

\begin{lemma}
\label{submatrix}
Let $D'_{m\times m}$ be a principal submatrix of $D(T(a,b))$. Then
\begin{itemize}
    \item $\lambda_2(D') < 0,$ $\lambda_3(D') < -0.4226,$ and $\lambda_4(D') < -1.5774;$ 
    \item $\lambda_i(D') = -2$ for $5\leq i \leq m-1.$
\end{itemize}
\end{lemma}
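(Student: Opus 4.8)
The plan is to derive Lemma~\ref{submatrix} as a direct consequence of Theorem~\ref{interlacing} applied to the already-established spectrum of $T(a,b)$. Since $D'_{m\times m}$ is a principal submatrix of $D(T(a,b))$, it is itself the distance matrix of an induced subgraph, so the interlacing inequalities $\lambda_{n-m+i}(D(T(a,b))) \leq \lambda_i(D') \leq \lambda_i(D(T(a,b)))$ hold for each $i$. The key observation is that I only need the \emph{upper} bounds $\lambda_i(D') \leq \lambda_i(D(T(a,b)))$, together with the explicit eigenvalue table established immediately before the lemma.

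For the first three bullet inequalities, I would instantiate the upper interlacing bound at $i=2,3,4$. From the table, $\lambda_2(D(T(a,b))) \in [-0.5578,0)$, so $\lambda_2(D(T(a,b))) < 0$ and hence $\lambda_2(D') \leq \lambda_2(D(T(a,b))) < 0$. Identically, $\lambda_3(D(T(a,b))) < -0.4226$ gives $\lambda_3(D') < -0.4226$, and $\lambda_4(D(T(a,b))) < -1.5774$ gives $\lambda_4(D') < -1.5774$. Each is a one-line substitution once the table is in hand. I should check that $m \geq 4$ for these to be meaningful; for smaller $m$ the relevant indices simply do not occur, so the statements are vacuous and no separate argument is needed.

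For the second bullet, the point is that the eigenvalue $-2$ occupies positions $5$ through $n-1$ in $D(T(a,b))$ with multiplicity $a+b-2 = n-5$. I would apply two-sided interlacing: the upper bound $\lambda_i(D') \leq \lambda_i(D(T(a,b)))$ and the lower bound $\lambda_{n-m+i}(D(T(a,b))) \leq \lambda_i(D')$. For an index $i$ with $5 \leq i \leq m-1$, I want to squeeze $\lambda_i(D')$ between two copies of $-2$. The upper side needs $\lambda_i(D(T(a,b))) = -2$, which holds as long as $5 \leq i \leq n-1$; since $i \leq m-1 \leq n-1$ this is automatic. The lower side needs $\lambda_{n-m+i}(D(T(a,b))) = -2$, i.e. $5 \leq n-m+i \leq n-1$; the right inequality gives $i \leq m-1$ (our hypothesis) and the left gives $i \geq m-n+5$, which is implied by $i \geq 5$ since $m \leq n$. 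Thus both bounding eigenvalues equal $-2$, forcing $\lambda_i(D') = -2$.

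The main obstacle here is not conceptual but bookkeeping: I must track the index shifts in the interlacing inequality carefully so that both the lower-indexed bound $\lambda_{n-m+i}$ and the upper-indexed bound $\lambda_i$ land inside the block of $-2$ eigenvalues, and I must confirm the index ranges are nonempty (in particular that $m - n + 5 \leq 5$, which follows from $m \leq n$). Once the index arithmetic is pinned down, the proof is a mechanical invocation of Theorem~\ref{interlacing} against the established spectrum table, with no new estimates required.
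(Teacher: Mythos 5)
Your core argument---upper interlacing bounds at $i=2,3,4$ against the spectrum table, and a two-sided squeeze between copies of $-2$ for $5\leq i\leq m-1$---is exactly the content the paper intends; the paper states this lemma without proof precisely because it is an immediate consequence of the table together with interlacing, and your index bookkeeping ($i \leq m-1 \leq n-1$ on the upper side, $n-m+i \geq 5$ following from $i\geq 5$ and $m\leq n$ on the lower side) is correct.

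However, one supporting claim in your write-up is false and should be repaired: a principal submatrix of $D(T(a,b))$ is \emph{not} in general the distance matrix of an induced subgraph. For instance, take two leaves attached to the two different centers of $T(a,b)$: the corresponding $2\times 2$ principal submatrix is $\left(\begin{smallmatrix} 0 & 4 \\ 4 & 0\end{smallmatrix}\right)$, while the induced subgraph on those two vertices is disconnected and has no distance matrix at all; more generally, distances in $G[S]$ can exceed the entries of $D_G[S]$. Consequently the graph-theoretic Theorem~\ref{interlacing}, whose hypothesis is that $D(H)$ (the distance matrix of the induced subgraph $H$) coincides with the principal submatrix, does not literally apply. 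The correct justification is the underlying Cauchy interlacing theorem for real symmetric matrices, which gives $\lambda_{n-m+i}(A) \leq \lambda_i(B) \leq \lambda_i(A)$ for \emph{any} principal submatrix $B$ of a symmetric matrix $A$, with no graph interpretation needed. This distinction is not pedantic here: the lemma is deliberately stated for arbitrary principal submatrices because the paper later applies it to matrices such as the truncated $D_G[V(C_n)]$ (entries capped at $2$) and the parametrized matrices $D_G[V(H_i)]$, which are not distance matrices of the induced subgraphs in question. With that one substitution of justification, your proof is complete and is the same argument the paper relies on.
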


We have the following result from \cite{atik} and \cite{fowler}.
\begin{lemma}
\label{cycle spectrum}
The cycle graph $C_{n}$ has distance eigenvalues as follows.

For $n=2p+1$ where $p\in\mathbb{Z},$ we have $\frac{n^{2}-1}{4},\left(-\frac{1}{4} \sec ^{2}\left(\frac{\pi j}{n}\right)\right)^{(2)}, j=1, \dots, p.$

For $n=2p$ where $p\in\mathbb{Z}$, we have $\frac{n^{2}}{4}, 0^{(p-1)},\left(-\csc ^{2}\left(\frac{\pi(2 j-1)}{n}\right)\right)^{(2)}, j=1, \ldots,\left\lfloor\frac{p}{2}\right\rfloor$ and $-1$ if $p$ is
odd. 
\end{lemma}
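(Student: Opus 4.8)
The plan is to exploit the circulant structure of the distance matrix. On $C_n$ the distance between vertices $i$ and $j$ is $\min(|i-j|,\,n-|i-j|)$, which depends only on $i-j \pmod n$; hence $D(C_n)$ is the circulant matrix with first row $(c_0,c_1,\dots,c_{n-1})$, where $c_j = \min(j,\,n-j)$. First I would invoke the standard diagonalization of circulant matrices: writing $\omega = e^{2\pi i/n}$, the vectors $(1,\omega^k,\omega^{2k},\dots,\omega^{(n-1)k})^\top$ are eigenvectors for $k=0,1,\dots,n-1$, with eigenvalues
$$\lambda_k = \sum_{j=0}^{n-1} c_j\,\omega^{jk}.$$
Because $D(C_n)$ is real and symmetric we have $c_j = c_{n-j}$, so each $\lambda_k$ is real and $\lambda_k = \lambda_{n-k}$. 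This symmetry is exactly what produces the multiplicity-two eigenvalues in the statement, and it collapses the sum to $\lambda_k = 2\sum_{j} c_j \cos(2\pi jk/n)$ over the independent range of $j$.

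Next I would split into the two parity cases. For $n = 2p+1$ the sum becomes $\lambda_k = 2\sum_{j=1}^{p} j\cos(2\pi jk/n)$, while for $n = 2p$ the middle distance $c_p = p$ is unpaired, giving $\lambda_k = 2\sum_{j=1}^{p-1} j\cos(2\pi jk/n) + p(-1)^k$. The value $k=0$ is immediate: summing $c_j$ directly yields $\lambda_0 = p(p+1) = \tfrac{n^2-1}{4}$ in the odd case and $\lambda_0 = p^2 = \tfrac{n^2}{4}$ in the even case, accounting for the top eigenvalue. For the even case I would then isolate the dichotomy governed by the parity of $k$: when $k$ is even the two contributions cancel to give $\lambda_k = 0$ (there are $p-1$ such $k$, matching the claimed multiplicity of $0$), when $k$ is odd one obtains the $-\csc^2(\pi(2j-1)/n)$ values, and the self-paired index $k=p$ — which occurs precisely when $p$ is odd — must be treated separately, where $\cos(\pi j) = (-1)^j$ makes the sum telescope to the lone eigenvalue $-1$.

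The computational heart, and the step I expect to be the main obstacle, is evaluating the weighted cosine sums $\sum_{j} j\cos(j\theta)$ in closed form and simplifying them to the advertised $\sec^2$ and $\csc^2$ expressions. The cleanest route is to start from the geometric series $\sum_{j=0}^{m} e^{ij\theta} = \frac{e^{i(m+1)\theta}-1}{e^{i\theta}-1}$, differentiate with respect to $\theta$ to bring down the factor $j$, and then take real parts, using $\theta = 2\pi k/n$ together with $e^{in\theta}=1$ to force the cancellation. The delicate bookkeeping lies in tracking the boundary terms and applying the half-angle identity $1-\cos\theta = 2\sin^2(\theta/2)$; extracting $-\tfrac14\sec^2(\pi k/n)$ in the odd case and $-\csc^2(\pi(2j-1)/n)$ in the odd-$k$ even case from these manipulations is the crux. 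Once the sums are in closed form, a final counting check — verifying that the listed eigenvalues total $n$ with the stated multiplicities in each parity subcase, and that their sum is the trace $0$ — completes the proof.
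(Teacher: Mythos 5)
The paper offers no proof of this lemma at all---it is imported verbatim from \cite{atik} and \cite{fowler}---so your proposal is not competing with an internal argument but supplying the proof the paper outsources to its references. Your route is the standard one (and essentially the one used in those sources): $D(C_n)$ is circulant with first row $c_j=\min(j,\,n-j)$, its eigenvalues are $\lambda_k=\sum_j c_j\,\omega^{jk}$ evaluated at the $n$-th roots of unity, the symmetry $c_j=c_{n-j}$ forces $\lambda_k=\lambda_{n-k}$ and hence the multiplicity-two pairs, and your parity bookkeeping (the $k=0$ Perron value; even $k\neq 0$ giving $0$ in the even case; the index $k=p$ needing separate treatment exactly when $p$ is odd) is correct. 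Two warnings about the computation you defer as the ``crux.'' First, in the odd case $n=2p+1$ the closed form is \emph{not} $\lambda_k=-\tfrac14\sec^2(\pi k/n)$ as your notation suggests: carrying out the geometric-series differentiation gives $\lambda_k=-\bigl(1-(-1)^k\cos(\pi k/n)\bigr)/\bigl(2\sin^2(\pi k/n)\bigr)$, which simplifies via half-angle identities to $-\tfrac14\sec^2(\pi m/n)$ with $m=k/2$ for $k$ even and $m=(n-k)/2$ for $k$ odd; the lemma's index $j$ is thus a reindexing of the circulant index $k$, and each $j\in\{1,\dots,p\}$ arises exactly twice because each pair $\{k,n-k\}$ contains one even and one odd index. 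Second, in the even case the same computation gives the uniform identity $\lambda_k=-\csc^2(\pi k/n)$ for \emph{all} odd $k$, which already yields $\lambda_p=-\csc^2(\pi/2)=-1$ when $p$ is odd, so your separate telescoping argument at $k=p$ is correct but unnecessary; what genuinely requires the self-pairing observation is only that this eigenvalue has multiplicity one rather than two. With those adjustments your outline closes into a complete and correct proof.
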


\noindent\textbf{Notation:} We denote by $D_G[S]$ the principal submatrix of $D(G)$ induced by $S.$

\begin{definition}[Forbidden Subgraph]
A graph $H$ is a \emph{forbidden subgraph} of a graph $G$ if the set of induced subgraphs of $G$ does not include a graph isomorphic to $H$.
\end{definition}

\begin{lemma}
\label{simple cycles}
For all $n \geq 4,$ if $D_G[V(C_n)]=D(C_n),$ then $C_n$ is a forbidden subgraph of $G$.
\end{lemma}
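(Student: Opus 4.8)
The plan is to argue by contradiction using the interlacing constraints from Lemma~\ref{submatrix} together with the explicit cycle spectrum from Lemma~\ref{cycle spectrum}. Assume $D_G[V(C_n)] = D(C_n)$, so that $D(C_n)$ occurs as a principal submatrix of $D(G)$. Then Lemma~\ref{submatrix} forces the ordered eigenvalues of $D(C_n)$ to satisfy $\lambda_2 < 0$, $\lambda_3 < -0.4226$, $\lambda_4 < -1.5774$, and $\lambda_i = -2$ for $5 \le i \le n-1$. I would show that the actual spectrum of $D(C_n)$ violates one of these inequalities for every $n \ge 4$, so that no such submatrix can occur and $C_n$ is forbidden. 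The argument splits on the parity of $n$, and in each parity a single one of the four constraints suffices.

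First I would dispose of the even cycles. For $n = 2p$ with $p \ge 2$, Lemma~\ref{cycle spectrum} gives the eigenvalue $0$ with multiplicity $p-1 \ge 1$, while the unique positive eigenvalue is $\tfrac{n^2}{4}$ and every remaining eigenvalue ($-\csc^2$ terms and possibly $-1$) is negative. Hence the second-largest eigenvalue is exactly $\lambda_2 = 0$, contradicting the requirement $\lambda_2 < 0$. This rules out all even $n \ge 4$ at once.

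For the odd cycles $n = 2p+1 \ge 5$, the relevant constraint is the bound on $\lambda_3$. The non-leading eigenvalues are $-\tfrac14\sec^2\!\big(\tfrac{\pi j}{n}\big)$ for $j = 1,\dots,p$, each of multiplicity two, and since $\sec^2$ is increasing on $(0,\pi/2)$ the least negative of these occurs at $j=1$; thus $\lambda_2 = \lambda_3 = -\tfrac14\sec^2\!\big(\tfrac{\pi}{n}\big)$. I would then establish $\lambda_3 \ge -0.4226$ by checking the equivalent inequality $\cos(\pi/n) \ge 0.7692$: it holds at $n=5$ since $\cos(\pi/5) \approx 0.809$, and for every larger odd $n$ it follows from the monotone increase of $\cos(\pi/n)$ toward $1$. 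As $-0.382 \ge -0.4226$, the constraint $\lambda_3 < -0.4226$ fails, completing the contradiction.

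The main obstacle is not the inequalities themselves—each reduces to a one-line monotonicity check once the spectrum is written down—but correctly reading off the \emph{ordered} spectrum of $D(C_n)$ from Lemma~\ref{cycle spectrum} in the two parity cases, in particular confirming that $0$ really supplies $\lambda_2$ for even $n$ and that the doubled value $-\tfrac14\sec^2(\pi/n)$ really supplies $\lambda_3$ for odd $n$. Once the ordering is pinned down, the contradiction is immediate and uniform in $n$, and the forbidden-subgraph conclusion follows from the hypothesis that the cycle distances are realized as a principal submatrix of $D(G)$.
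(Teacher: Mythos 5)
Your proof is correct, and it takes a genuinely different route from the paper's for the larger cycles. The paper handles $n=4,5,6,7$ by direct checks (using $\lambda_2=0$ for $C_4$, $\lambda_3=-0.3820$ for $C_5$, and $\lambda_5\neq-2$ for $C_6,C_7$), and then for $n\geq 8$ switches to a counting argument: by Lemma~\ref{cycle spectrum} the eigenvalue $-2$ can occur in the spectrum of $D(C_n)$ with multiplicity at most $2$, while Lemma~\ref{submatrix} demands multiplicity $n-5\geq 3$. You instead make two of the interval constraints do all the work uniformly: for even $n$ the eigenvalue $0$ has multiplicity $p-1\geq 1$ and everything else besides $\tfrac{n^2}{4}$ is negative, so $\lambda_2=0$ violates $\lambda_2<0$; for odd $n$ the monotonicity of $\sec^2$ on $(0,\pi/2)$ pins $\lambda_2=\lambda_3=-\tfrac14\sec^2(\pi/n)\geq-\tfrac14\sec^2(\pi/5)\approx-0.382>-0.4226$, violating the $\lambda_3$ bound. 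Both orderings you read off from Lemma~\ref{cycle spectrum} are justified correctly. What your version buys is uniformity: no case-by-case numerics for $n=6,7$ and no multiplicity count, just one monotonicity observation per parity class. What the paper's version buys is robustness of a different kind: the multiplicity-of-$-2$ argument needs no control over where the nonzero eigenvalues lie, only that they avoid $-2$, which is a coarser (and arguably easier to verify) property than the sharp interval bounds your argument leans on.
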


\begin{proof}
We proceed with contradiction. First consider $n=4,5,6,7.$

Suppose $C_4$ is a subgraph of $G$ and $D_G[V(C_4)] = D(C_4).$ Then $\lambda_2(D(C_4)) = 0,$ contradicting Lemma~\ref{submatrix}. So $C_4$ is forbidden. 

Suppose $C_5$ is a subgraph of $G$ and $D_G[V(C_5)] = D(C_5).$ Then $\lambda_3(D(C_5)) = -0.3820$ contradicting Lemma~\ref{submatrix}. So $C_5$ is forbidden. 

Suppose $C_6$ is a subgraph of $G$ and $D_G[V(C_6)] = D(C_6).$ Then $\lambda_5(D(C_6)) \neq -2$ contradicting Lemma~\ref{submatrix}. So $C_6$ is forbidden. 

Suppose $C_7$ is a subgraph of $G$ and $D_G[V(C_7)] = D(C_7).$ Then $\lambda_5(D(C_7)) \neq -2$ contradicting Lemma~\ref{submatrix}. So $C_7$ is forbidden. 

Suppose $C_n$ is a subgraph of $G$ and $D_G[V(C_n)] = D(C_n)$ where $n\geq 8$. 

Using Lemma~\ref{cycle spectrum}, we count the multiplicity of $-2$ as an eigenvalue of $C_n$. We check that $\frac{n^2-1}{4}, \frac{n^2}{4}, 0, -1 \neq -2.$ Next, $\left(-\frac{1}{4} \sec ^{2}\left(\frac{\pi j}{n}\right)\right) = -2$ holds for at most one value of $j,$ and the same is true for $\left(-\csc ^{2}\left(\frac{\pi(2 j-1)}{n}\right)\right)$. Thus the multiplicity of $-2$ as an eigenvalue of $C_n$ is at most $2.$ On the other hand, by Lemma~\ref{submatrix}, $C_n$ must have $\lambda_i = -2$ for $5\leq i\leq n-1$, so $-2$ has multiplicity $n-5\geq 3,$ a contradiction.

\end{proof}

\begin{lemma}
\label{cycles}
The cycle graph $C_n$ is a forbidden subgraph of $G$ for $4\leq n \leq 7.$
\end{lemma}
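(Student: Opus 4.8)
The plan is to reduce every case to Lemma~\ref{simple cycles}, which already forbids $C_n$ as an induced subgraph as soon as the induced copy carries its intrinsic cycle metric, i.e. $D_G[V(C_n)] = D(C_n)$. Thus the whole problem is to control how far the $G$-distances between cycle vertices can fall below the honest cycle distances. The first step I would establish is that almost every entry is forced: if $C_n$ is an induced subgraph and $u,v$ are cycle vertices, then $d_G(u,v)=1$ when they are adjacent, and $d_G(u,v)=2$ when they are at cycle-distance $2$, since non-adjacency in the induced copy gives $d_G(u,v)\ge 2$ while the cycle vertex between them is a common neighbor and gives $d_G(u,v)\le 2$. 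Consequently the only entries that can deviate from $D(C_n)$ are those of pairs at cycle-distance $3$, and each such entry satisfies $d_G(u,v)\in\{2,3\}$.

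For $C_4$ and $C_5$ the diameter is $2$, so there are no cycle-distance-$3$ pairs and every entry is forced; hence $D_G[V(C_4)]=D(C_4)$ and $D_G[V(C_5)]=D(C_5)$ automatically, and Lemma~\ref{simple cycles} applies directly. This settles $n=4,5$.

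For $C_6$ and $C_7$ the long-diagonal pairs at cycle-distance $3$ (three for $C_6$, seven for $C_7$) may each independently be $2$ or $3$, so $D_G[V(C_n)]$ is one of finitely many matrices, and I would show that each violates Lemma~\ref{submatrix}. Using the dihedral symmetry of the cycle to collapse equivalent diagonal patterns leaves a short list, and for each candidate I would test whether $-2$ occurs as an eigenvalue with the multiplicity and position that Lemma~\ref{submatrix} demands ($\lambda_i=-2$ for $5\le i\le m-1$), falling back on a full spectrum when convenient. For instance, shortening all three diagonals of $C_6$ gives the circulant matrix with first row $(0,1,2,2,2,1)$ and spectrum $\{8,0,-1,-1,-3,-3\}$, where $\lambda_2=0\not<0$ and $\lambda_5=-3\ne-2$; shortening all seven diagonals of $C_7$ gives $D=2J-2I-A$ for the adjacency matrix $A$ of $C_7$, whose eigenvalues are $10$ and $-2-2\cos(2\pi k/7)$ for $k=1,\dots,6$, none equal to $-2$. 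To cut down the remaining $C_7$ patterns I would prune structurally: whenever a diagonal $\{u,v\}$ is shortened there is a common neighbor $w\notin V(C_n)$, and together with the length-$3$ arc $u-a-b-v$ it yields an induced $C_5$ on $\{u,a,b,v,w\}$ when $w$ is adjacent to neither $a$ nor $b$ and an induced $C_4$ when $w$ is adjacent to exactly one of them, both already forbidden. This eliminates every pattern except the degenerate ones in which each witness is adjacent to whole blocks of consecutive cycle vertices, and those force the most symmetric distance matrices, dispatched by the direct computation above.

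The main obstacle is the $C_7$ case analysis: several dihedral-inequivalent diagonal patterns survive the easy reductions, and for each the associated $7\times7$ distance matrix must be shown to fail the $-2$ eigenvalue condition (or one of the $\lambda_2,\lambda_3,\lambda_4$ bounds) of Lemma~\ref{submatrix}. Each individual check is a routine finite eigenvalue computation; the delicate part is organizing the symmetry reduction together with the smaller-cycle reduction so that the patterns are covered exhaustively and none is silently omitted.
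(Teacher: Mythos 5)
Your proposal is correct and shares the paper's skeleton: forced entries at cycle-distance at most $2$, immediate reduction of $C_4,C_5$ to Lemma~\ref{simple cycles}, and the same two terminal computations (the all-shortened $C_6$ matrix with spectrum $\{8,0,-1,-1,-3,-3\}$ and $2J-2I-A$ for $C_7$) rejected by Lemma~\ref{submatrix}. Where you genuinely differ is the intermediate structural step, and your version is in one respect sounder. The paper proves that a shortening witness $x$ must be adjacent to \emph{every} cycle vertex by taking a maximal gap between $x$-neighbours and applying Lemma~\ref{simple cycles} to the cycle formed by $x$ together with that arc; this silently asserts $D_G[\{x\}\cup S_{ij}]=D(C_{d'(i,j)+2})$, which is automatic only when that arc-cycle has length $4$ or $5$ --- for longer arc-cycles (which do arise, e.g.\ $x$ adjacent to exactly two adjacent vertices of $C_6$) the $G$-distances could again be shortened by outside vertices, and the paper does not address this. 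Your pruning instead manufactures an induced $C_4$ or $C_5$, which at that point are forbidden \emph{unconditionally} by the cases just established, so you never need the unproved metric assertion; that is what your route buys. What it costs is bookkeeping, and there your write-up has one loose end: constraining the witness $w$ only on the two interior vertices $a,b$ of the short arc does not eliminate mixed patterns --- a witness adjacent to exactly the four consecutive vertices $u,a,b,v$ survives your stated pruning, yet the resulting distance matrix is not one of your two ``symmetric'' ones. You must iterate the same gap argument around the whole cycle to force $w$ adjacent to all of $V(C_n)$, and for $C_7$ a gap of three non-neighbours yields an induced $C_6$, so $n=6$ must be settled before $n=7$. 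Both repairs are routine and use only tools you already deploy; alternatively, your declared fallback of spectrally checking every surviving diagonal pattern against Lemma~\ref{submatrix} would also close the hole, at the price of committing to a longer finite verification that every mixed pattern indeed fails the test.
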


\begin{proof}
We proceed with contradiction. We consider $n=4,5$ first.

Suppose $C_4$ is a subgraph of $G.$ Then $D_G[V(C_4)] = D(C_4).$ So by Lemma~\ref{simple cycles}, $C_4$ is forbidden. 

Suppose $C_5$ is a subgraph of $G.$ Then $D_G[V(C_5)] = D(C_5).$ So by Lemma~\ref{simple cycles}, $C_5$ is forbidden. 

For $n\geq 6,$ suppose $C_n$ is a subgraph of $G.$ For any $1\leq p,q \leq n$, denote the distance between $v_p$ and $v_q$ along $C_n$ by $d(p,q) = \min(|p-q|, n-|p-q|).$ Next, denote the directed distance between $v_p$ and $v_q$ along $C_n$, by $d'(p, q)=q-p \bmod{n}.$ Also denote $S_{pq}$ as the subset of $V(C_n)$ defined as $S_{pq}=\{v_{p+i \bmod{n}} | i\in\mathbb{Z}, 0\leq i\leq d'(p,q)\}$.

Then $D_G[V(C_n)] = 
\{d_{ij}\}_{n\times n},$ where $\begin{cases} d_{ij} = d(i, j), \text{ if } d(i, j) \leq 2, \\ 2\leq d_{ij} \leq d(i, j), \text{ if } d(i, j) > 2.\\ \end{cases}$

If $d_{ij} = d(i, j)$ for every pair $i, j$ then $D_G[V(C_n)]=D(C_n),$ so by Lemma~\ref{simple cycles}, $C_n$ is forbidden.

Otherwise, $d_{ij} = d(i, j)$ does not hold for every pair $i, j$. Then there exists some vertex $x \in V(G)\backslash V(C_n)$ adjacent to two distinct vertices $v_i, v_j \in V(C_n).$

We claim that every $v\in V(C_n)$ is adjacent to $x.$ Suppose otherwise. Then there exists $v_i, v_j$ both adjacent to $x$ such that $d'(i, j) \geq 2$ and every $v\in S_{ij} \backslash \{v_i, v_j\}$ is not adjacent to $x.$ Then $D_G[\{ x \} \cup S_{ij}]=D(C_{d'(i, j)+2}),$ where $d'(i, j)+2\geq 4,$ a contradiction by Lemma~\ref{simple cycles}.

Then $d_{ij} \leq 2$ for all $i,j$, i.e., $D_G[V(C_n)] = 
\{d_{ij}\}_{n\times n},$ where $\begin{cases} d_{ij} = d(i, j) \text{ if } d(i, j) \leq 2, \\ d_{ij} = 2 \text{ if } d(i, j) > 2.\\ \end{cases}$

We check that for $n=6,7,$ $\lambda_5$ of the above matrix is $-3, -1.5550$, respectively, a contradiction by Lemma~\ref{submatrix}.
\end{proof}

\section{Graphs of Diameter 4 or Higher}

In this section we consider graphs cosepctral to $T(a,b)$ with diameter 4 or higher.

\begin{theorem}
\label{S(a',b')}
If $T(a',b')$ is cospectral to $T(a,b)$, then $T(a',b') \cong T(a,b)$.
\end{theorem}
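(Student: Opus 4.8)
The plan is to compare distance characteristic polynomials directly, using Lemma~\ref{double star polynomial}. Since the distance matrix is real symmetric, two graphs are cospectral precisely when their distance characteristic polynomials coincide, so I would begin by assuming that $T(a',b')$ and $T(a,b)$ have equal distance characteristic polynomials. Matching degrees (equivalently, comparing vertex counts, which are $a+b+3$ and $a'+b'+3$) gives $a'+b'=a+b$, so the factor $(-\lambda-2)^{a+b-2}$ is common to both and cancels. This reduces the problem to the polynomial identity $p_{a,b}(\lambda)=p_{a',b'}(\lambda)$ between the two degree-$5$ factors.

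The key observation is that $p_{a,b}$ depends on $a$ and $b$ only through the elementary symmetric functions $s=a+b$ and $t=ab$. Collecting the coefficients in Lemma~\ref{double star polynomial} yields
$$p_{a,b}(\lambda)=-\lambda^{5}+(2s-4)\lambda^{4}+(12t+18s+2)\lambda^{3}+(24t+44s+28)\lambda^{2}+(8t+36s+40)\lambda+(8s+16).$$
I would then simply read off $s$ and $t$ from these coefficients. The coefficient of $\lambda^{4}$ is $2s-4$, which already forces $s=s'$ (consistent with the vertex count), and then the coefficient of $\lambda$, namely $8t+36s+40$ (or equally the $\lambda^{2}$ or $\lambda^{3}$ coefficient), forces $t=t'$. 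Hence $a+b=a'+b'$ and $ab=a'b'$.

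Finally, the unordered pairs $\{a,b\}$ and $\{a',b'\}$ are both exactly the root set of the quadratic $x^{2}-sx+t$, so they coincide; that is, either $(a',b')=(a,b)$ or $(a',b')=(b,a)$. In either case $T(a',b')\cong T(a,b)$, since interchanging the two pendant stars $K_{1,a}$ and $K_{1,b}$ is a graph isomorphism.

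I do not expect a genuine obstacle here: the argument is essentially the observation that $p_{a,b}$ is a symmetric function of $(a,b)$ whose coefficients invert to recover the pair $(s,t)$, followed by routine coefficient matching. The only point requiring care is the rewriting of $p_{a,b}$ in the $(s,t)$ variables and confirming that the system for $(s,t)$ given by the coefficients is nonsingular, which it is, since $s$ appears alone in the $\lambda^{4}$ coefficient and $t$ is then determined by any lower coefficient. It is worth noting that none of the interlacing machinery (Lemmas~\ref{submatrix}--\ref{cycles}) is needed for this particular theorem; it is a self-contained consequence of the explicit polynomial in Lemma~\ref{double star polynomial}.
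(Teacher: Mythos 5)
Your proposal is correct and takes essentially the same route as the paper: both cancel the $(-\lambda-2)^{a+b-2}$ factor after noting the vertex counts (equivalently, polynomial degrees) force $a+b=a'+b'$, then match a lower coefficient of $p_{a,b}$ (the paper uses the $\lambda^1$ coefficient $40+36(a+b)+8ab$, just as you do) to get $ab=a'b'$, and conclude that equal sum and product determine the unordered pair. Your rewriting in terms of $s=a+b$, $t=ab$ is a slightly tidier packaging of the same coefficient-matching argument, and your observation that the interlacing machinery is not needed here matches the paper, whose proof of this theorem likewise uses only Lemma~\ref{double star polynomial}.
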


\begin{proof}
For the graph $T(a,b)$ we know that the characteristic polynomial is $(-\lambda-2)^{a+b-2}p_{a,b}(\lambda)$ from Lemma~\ref{double star polynomial}. 

Let us consider $T(a,b)$ and $T(a',b')$. Assume for the sake of contradiction that these have the same distance spectra. Then they must have the same number of distance eigenvalues, so $a+b = a'+b'$. Equating the $\lambda^1$ coefficients of $p_{a,b}(\lambda)$ and $p_{a',b'}(\lambda)$, we must have $40+36a+36b+8ab=40+36a'+36b'+8a'b'$ which implies $ab=a'b'$. Since the unordered pairs $(a,b)$ and $(a', b')$ have the same sum and product, the unordered pairs must be equal.
\end{proof}

\begin{figure}[htbp]
    \centering
    \begin{minipage}{.5\textwidth}
    \begin{tikzpicture}[node distance={15mm}, thin, main/.style = {draw, circle}]
\node[main] (1) {$v_1$}; 
\node[main] (2) [right of=1] {$v_2$};
\node[main] (3) [right of=2] {$v_3$}; 
\node[main] (4) [right of=3] {$v_4$};
\node[main] (5) [right of=4] {$v_5$};
\node[main] (6) [above of=3] {$v_6$};
\draw (1) -- (2);
\draw (2) -- (3);
\draw (3) -- (4);
\draw (4) -- (5);
\draw (1) -- (6);
\draw (2) -- (6);
\draw (3) -- (6);
\draw (4) -- (6);
\draw (5) -- (6);
\end{tikzpicture}
    \caption{$H_1$}
    \label{fig:h1}
    \end{minipage}%
    \begin{minipage}{.5\textwidth}
    \begin{tikzpicture}[node distance={15mm}, thin, main/.style = {draw, circle}]
\node[main] (1) {$v_1$}; 
\node[main] (2) [right of=1] {$v_2$};
\node[main] (3) [right of=2] {$v_3$}; 
\node[main] (4) [right of=3] {$v_4$};
\node[main] (5) [right of=4] {$v_5$}; 
\node[main] (6) [above of=3] {$v_6$};
\draw (1) -- (2);
\draw (2) -- (3);
\draw (3) -- (4);
\draw (4) -- (5);
\draw (1) -- (6);
\draw (2) -- (6);
\draw (3) -- (6);
\draw (4) -- (6);
\end{tikzpicture}
    \caption{$H_2$}
    \label{fig:h2}
    \end{minipage}
\end{figure}

\begin{figure}[htbp]
    \centering
    \begin{minipage}{.5\textwidth}
    \begin{tikzpicture}[node distance={15mm}, thin, main/.style = {draw, circle}]
\node[main] (1) {$v_1$}; 
\node[main] (2) [right of=1] {$v_2$};
\node[main] (3) [right of=2] {$v_3$}; 
\node[main] (4) [right of=3] {$v_4$};
\node[main] (5) [right of=4] {$v_5$}; 
\node[main] (6) [above of=3] {$v_6$};
\draw (1) -- (2);
\draw (2) -- (3);
\draw (3) -- (4);
\draw (4) -- (5);
\draw (2) -- (6);
\draw (3) -- (6);
\draw (4) -- (6);
\end{tikzpicture}
    \caption{$H_3$}
    \label{fig:h3}
    \end{minipage}%
    \begin{minipage}{.5\textwidth}
    \begin{tikzpicture}[node distance={15mm}, thin, main/.style = {draw, circle}]
\node[main] (1) {$v_1$}; 
\node[main] (2) [right of=1] {$v_2$};
\node[main] (3) [right of=2] {$v_3$}; 
\node[main] (4) [right of=3] {$v_4$};
\node[main] (5) [right of=4] {$v_5$}; 
\node[main] (6) [above of=3] {$v_6$};
\draw (1) -- (2);
\draw (2) -- (3);
\draw (3) -- (4);
\draw (4) -- (5);
\draw (1) -- (6);
\draw (2) -- (6);
\draw (3) -- (6);
\end{tikzpicture}
    \caption{$H_4$}
    \label{fig:h4}
    \end{minipage}
\end{figure}

\begin{figure}[htbp]
    \centering
    \begin{minipage}{.5\textwidth}
    \begin{tikzpicture}[node distance={15mm}, thin, main/.style = {draw, circle}]
\node[main] (1) {$v_1$}; 
\node[main] (2) [right of=1] {$v_2$};
\node[main] (3) [right of=2] {$v_3$}; 
\node[main] (4) [right of=3] {$v_4$};
\node[main] (5) [right of=4] {$v_5$}; 
\node[main] (6) [above of=3] {$v_6$};
\draw (1) -- (2);
\draw (2) -- (3);
\draw (3) -- (4);
\draw (4) -- (5);
\draw (2) -- (6);
\draw (3) -- (6);
\end{tikzpicture}
    \caption{$H_5$}
    \label{fig:h5}
    \end{minipage}%
    \begin{minipage}{.5\textwidth}
    \begin{tikzpicture}[node distance={15mm}, thin, main/.style = {draw, circle}]
\node[main] (1) {$v_1$}; 
\node[main] (2) [right of=1] {$v_2$};
\node[main] (3) [right of=2] {$v_3$}; 
\node[main] (4) [right of=3] {$v_4$};
\node[main] (5) [right of=4] {$v_5$}; 
\node[main] (6) [above of=3] {$v_6$};
\draw (1) -- (2);
\draw (2) -- (3);
\draw (3) -- (4);
\draw (4) -- (5);
\draw (1) -- (6);
\draw (2) -- (6);
\end{tikzpicture}
    \caption{$H_6$}
    \label{fig:h6}
    \end{minipage}
\end{figure}

\begin{figure}[htbp]
    \centering
        \begin{minipage}{.5\textwidth}
    \begin{tikzpicture}[node distance={15mm}, thin, main/.style = {draw, circle}]
\node[main] (1) {$v_1$}; 
\node[main] (2) [right of=1] {$v_2$};
\node[main] (3) [right of=2] {$v_3$}; 
\node[main] (4) [right of=3] {$v_4$};
\node[main] (5) [right of=4] {$v_5$}; 
\node[main] (6) [above of=3] {$v_6$};
\draw (1) -- (2);
\draw (2) -- (3);
\draw (3) -- (4);
\draw (4) -- (5);
\draw (3) -- (6);
\end{tikzpicture}
    \caption{$H_7$}
    \label{fig:h7}
    \end{minipage}%
    \begin{minipage}{.5\textwidth}
    \begin{tikzpicture}[node distance={15mm}, thin, main/.style = {draw, circle}]
\node[main] (1) {$v_1$}; 
\node[main] (2) [right of=1] {$v_2$};
\node[main] (3) [right of=2] {$v_3$}; 
\node[main] (4) [right of=3] {$v_4$};
\node[main] (5) [right of=4] {$v_5$}; 
\node[main] (6) [right of=5] {$v_6$};
\draw (1) -- (2);
\draw (2) -- (3);
\draw (3) -- (4);
\draw (4) -- (5);
\draw (5) -- (6);
    \end{tikzpicture}
    \caption{$P_6$}
    \label{fig:p6}
    \end{minipage}
\end{figure}

\begin{lemma}
\label{d4 forbidden}
If $G$ and $T(a,b)$ are cospectral, then $H_1, H_2, H_3, H_4, H_5, H_6,H_7$ and $P_6$ are forbidden subgraphs of $G$.
\end{lemma}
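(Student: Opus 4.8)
The plan is to treat each of the eight graphs uniformly: assuming $G$ contains a copy of it as an induced subgraph, I would first pin down the induced distance submatrix $D_G[V(H_i)]$ exactly, and then read off a contradiction from Lemma~\ref{submatrix}. The starting observation is that since each $H_i$ (and $P_6$) is an \emph{induced} subgraph, the adjacencies among its six vertices are fixed: pairs joined by an edge of $H_i$ are at $G$-distance $1$, and every other pair is at $G$-distance at least $2$. Moreover, because any path inside $H_i$ is also a path in $G$, every off-diagonal entry of $D_G[V(H_i)]$ is at most the corresponding entry of the ordinary distance matrix $D(H_i)$; thus each such entry lies in the finite range $\{2,3,\dots,d_{H_i}(u,v)\}$.

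The heart of the argument is to show that in fact $D_G[V(H_i)] = D(H_i)$, i.e.\ that $G$ admits no shortcuts between vertices of $H_i$. Whenever $u,v$ are at $H_i$-distance $2$ they already share a common neighbor in $H_i$, so $d_G(u,v)=2$ automatically. For a pair $u,v$ at $H_i$-distance $k\in\{3,4\}$ joined by an induced path $u=w_0-w_1-\cdots-w_k=v$, I would suppose $d_G(u,v)<k$ and fix a $G$-geodesic between them of length at most $k-1$. Splicing this geodesic onto the induced path yields a closed walk from which one extracts a chordless cycle of length between $4$ and $2k-1\le 7$; since each $H_i$ has diameter at most $4$, this cycle is one of $C_4,\dots,C_7$, contradicting Lemma~\ref{cycles}. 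The delicate sub-cases arise when a geodesic vertex is adjacent to several interior vertices $w_1,\dots,w_{k-1}$: these either shorten the extracted cycle (again landing in $C_4$–$C_7$) or produce a denser six-vertex configuration that is one of the earlier graphs in the list. I would therefore order the eight graphs from densest to sparsest, beginning with $H_1$ (where $v_6$ is universal and the distance matrix is forced outright) and ending with $P_6$, so that every shortcut reduces either to a forbidden cycle or to an already-forbidden $H_j$.

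Once $D_G[V(H_i)] = D(H_i)$ is established, it remains a finite check: for each of the eight $6\times 6$ matrices I would compute the spectrum and exhibit a violation of Lemma~\ref{submatrix} — either $\lambda_2\ge 0$, or $\lambda_3\ge -0.4226$, or $\lambda_4\ge -1.5774$, or (most commonly) that $\lambda_5\neq -2$, which is precisely the condition forced on a six-vertex principal submatrix, since there $5\le i\le m-1$ collapses to $i=5$.

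The main obstacle is not this eigenvalue bookkeeping but the forcing step: controlling every way an external vertex can create a shortcut, and in particular handling the longest pairs — the distance-$4$ pairs in $H_3$–$H_7$ and the distance-$5$ pair $\{v_1,v_6\}$ in $P_6$ — where a geodesic could a priori splice into a cycle longer than $C_7$ and hence escape Lemma~\ref{cycles}. For $P_6$ this needs the extra remark that $d_G(v_1,v_5)$ and $d_G(v_2,v_6)$ are already forced to equal $4$ by the $C_7$ argument, which via the triangle inequality confines $d_G(v_1,v_6)$ to a short list of values whose spectra can each be checked directly.
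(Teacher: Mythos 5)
Your overall architecture (pin down $D_G[V(H_i)]$, then contradict Lemma~\ref{submatrix}) sounds plausible, but the forcing step --- the claim that one can always establish $D_G[V(H_i)]=D(H_i)$ --- has a genuine gap, and it is exactly the step the paper never attempts. First, the cycle-extraction claim is not valid as stated: after splicing a $G$-geodesic onto the induced path, chords from the external vertex to interior path vertices can collapse every cycle you try to extract down to triangles, which are not forbidden; this is why your argument needs the fallback branch ``or a denser six-vertex configuration earlier in the list.'' But that branch is circular. Concretely, take $H_2$ (path $v_1\cdots v_5$ with hub $v_6\sim v_1,v_2,v_3,v_4$) and its distance-$3$ pair $(v_2,v_5)$: a shortcut vertex $x$ adjacent to exactly $v_2,v_3,v_4,v_5$ among $\{v_1,\dots,v_5\}$ creates no induced $C_4$--$C_7$ at all, and $G[\{v_1,\dots,v_5,x\}]\cong H_2$ itself --- the same graph, neither denser nor earlier --- so no ordering of the eight graphs can make your induction well-founded. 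Other pairs misfire in different ways: for $H_5$ and the pair $(v_6,v_5)$, the analogous bad case ($x$ adjacent to $v_6,v_3,v_4,v_5$) yields a six-vertex configuration that is neither a forbidden cycle nor isomorphic to anything on the list, and a shortcut that reduces a distance-$4$ pair to distance $3$ involves \emph{two} external vertices, a case your sketch does not treat and where chord analysis is much heavier.

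The paper avoids forcing altogether. Since the $H_i$ are induced, each unknown entry of $D_G[V(H_i)]$ ranges over the finite set $\{2,\dots,d_{H_i}(u,v)\}$, so the paper simply enumerates \emph{all} candidate submatrices (the parameters $a,b,c,\dots$ in its proof) and checks every one against Lemma~\ref{submatrix}; this kills shortcut configurations without knowing anything about the structure of the shortcut vertices. For $H_2$, all four parameter choices fail the $\lambda_5=-2$ test, so the case that breaks your induction is disposed of by a $2\times 2$ table of eigenvalue computations. Across all eight graphs only three parameter assignments survive the spectral test ($H_3$ with $(a,b,c)=(3,4,3)$, killed by $\lambda_4=-1$, and one case each for $H_7$ and $P_6$), and only those last two require structural arguments --- which reduce to $C_4$, $C_5$, $H_1$, $H_2$, $H_4$, all strictly earlier, so the paper's induction genuinely closes. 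Note also that the surviving $P_6$ assignment $(2,3,4,3,3,2)$ has $d_G(v_1,v_4)=2$ and $d_G(v_1,v_5)=3$ yet passes the $\lambda_5=-2$ test: shortcut configurations cannot in general be excluded by spectra alone, so the structural forcing you propose is carrying real weight precisely where your argument breaks. To repair your proof you would either have to adopt the paper's exhaustive matrix enumeration, or supplement your dichotomy with ad hoc eigenvalue checks of the extra configurations (such as the self-reproducing $H_2$ case), at which point you have essentially reconstructed the paper's argument.
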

\begin{proof}
We proceed with contradiction. 

Suppose $H_1$ is an induced subgraph of $G.$ Then $D_G[V(H_1)] = D(H_1).$ We check that $\lambda_5(D(H_1)) \neq -2,$ a contradiction by Lemma~\ref{submatrix}.

Suppose $H_2$ is an induced subgraph of $G.$ Then $D_G[V(H_2)] = \begin{pmatrix}0 & 1 & 2 & 2 & a & 1 \\ 1 & 0 & 1 & 2 & b & 1 \\ 2 & 1 & 0 & 1 & 2 & 1 \\ 2 & 2 & 1 & 0 & 1 & 1 \\ a & b & 2 & 1 & 0 & 2 \\ 1 & 1 & 1 & 1 & 2 & 0 \end{pmatrix},$ where $a,b\in\{2,3\}.$ We check that $\lambda_5(D_G[V(H_2)]) \neq -2$ for all combinations of $a,b,$ a contradiction by Lemma~\ref{submatrix}.

Suppose $H_3$ is an induced subgraph of $G.$ Then $D_G[V(H_3)] = \begin{pmatrix}0 & 1 & 2 & a & b & 2 \\ 1 & 0 & 1 & 2 & c & 1 \\ 2 & 1 & 0 & 1 & 2 & 1 \\ a & 2 & 1 & 0 & 1 & 1 \\ b & c & 2 & 1 & 0 & 2 \\ 2 & 1 & 1 & 1 & 2 & 0 \end{pmatrix},$ where $a,c\in\{2,3\}$ and $b\in\{2,3,4\}.$ We check that $\lambda_5(D_G[V(H_3)]) = -2$ only when $(a,b,c) = (3,4,3),$ in which case $\lambda_4(D_G[V(H_3)]) = -1,$ a contradiction by Lemma~\ref{submatrix}.

Suppose $H_4$ is an induced subgraph of $G$. Then $D_G[V(H_4)] = \begin{pmatrix} 0 & 1 & 2 & a & b & 1 \\ 1 & 0 & 1 & 2 & c & 1 \\ 2 & 1 & 0 & 1 & 2 & 1 \\ a & 2 & 1 & 0 & 1 & 2 \\ b & c & 2 & 1 & 0 & d \\ 1 & 1 & 1 & 2 & d & 0 \end{pmatrix},$ where $a,c,d \in \{ 2,3 \}$ and $b \in \{ 2,3,4 \}$. We can check that $\lambda_5(D_G[V(H_4)]) \neq -2$ for all combinations of $a,b,c,d$, a contradiction by Lemma~\ref{submatrix}.

Suppose $H_5$ is an induced subgraph of $G.$ Then $D_G[V(H_5)] = \begin{pmatrix}0 & 1 & 2 & a & b & 2 \\ 1 & 0 & 1 & 2 & c & 1 \\ 2 & 1 & 0 & 1 & 2 & 1 \\ a & 2 & 1 & 0 & 1 & 2 \\ b & c & 2 & 1 & 0 & d \\ 2 & 1 & 1 & 2 & d & 0 \end{pmatrix},$ where $a,c,d\in\{2,3\}$ and $b\in\{2,3,4\}.$ We check that $\lambda_5(D_G[V(H_5)]) \neq -2$ for all combinations of $a,b,c,d$, a contradiction by Lemma~\ref{submatrix}.

Suppose $H_6$ is an induced subgraph of $G$. Then $D_G[V(H_6)] = \begin{pmatrix} 0 & 1 & 2 & a & b & 1 \\ 1 & 0 & 1 & 2 & c & 1 \\ 2 & 1 & 0 & 1 & 2 & 2 \\ a & 2 & 1 & 0 & 1 & d \\ b & c & 2 & 1 & 0 & e \\ 1 & 1 & 2 & d & e & 0 \end{pmatrix}$ where $a, c, d \in \{ 2,3 \}$ and $b, e \in \{ 2, 3, 4 \}$. We check that $\lambda_5(D_G[V(H_6)]) \neq -2$ for all combinations of $a,b,c,d$, a contradiction by Lemma~\ref{submatrix}.

Suppose $H_7$ is an induced subgraph of $G$. Then $D_G[V(H_7)] = \begin{pmatrix} 0 & 1 & 2 & a & b & c \\ 1 & 0 & 1 & 2 & d & 2 \\ 2 & 1 & 0 & 1 & 2 & 1 \\ a & 2 & 1 & 0 & 1 & 2 \\ b & d & 2 & 1 & 0 & e \\ c & 2 & 1 & 2 & e & 0 \end{pmatrix}$ where $a, c, d, e \in \{ 2,3 \}$ and $b \in \{ 2, 3, 4 \}$. We check that $\lambda_5(D_G[V(H_7)]) = -2$ only when $(a,b,c,d,e) = (3,4,2,3,2)$. But $c=2$ implies there exists $x \in V(G)$ such that $x$ is adjacent to $v_1$ and $v_6$. If $x$ is not adjacent to $v_2$ and not adjacent to $v_3,$ then $G[\{x,v_1,v_2,v_3,v_6\}]=C_5.$ If $x$ is adjacent to $v_2$ but not $v_3$, then $G[\{x,v_2,v_3,v_6\}] = C_4$, and if $x$ is adjacent to $v_3$ but not $v_2$, then $G[\{x,v_1,v_2,v_3\}] = C_4.$ Finally, if $x$ is adjacent to both $v_2$ and $v_3,$ then either $G[\{x,v_1,v_2,v_3,v_4,v_5\}]=H_4,H_2$ or $H_1$, or $x$ is adjacent to $v_5$ but not $v_4,$ in which case $G[\{x,,v_3,v_4,v_5\}]=C_4.$ All cases lead to contradictions by Lemmas~\ref{cycles} and \ref{d4 forbidden}.

Suppose $P_6$ is an induced subgraph of $G$. We have $D_G[V(P_6)]=\begin{pmatrix} 0 & 1 & 2 & a & b & c \\
                                1 & 0 & 1 & 2 & d & e \\
                                2 & 1 & 0 & 1 & 2 & f \\
                                a & 2 & 1 & 0 & 1 & 2 \\
                                b & d & 2 & 1 & 0 & 1 \\
                                c & e & f & 2 & 1 & 0 \\
\end{pmatrix},$ where $a,d,f \in \{2,3\},$ $b,e \in \{2,3,4\}, $ and $c \in \{2,3,4,5\}$. Note that if $c=5,$ then $a=d=f=3$ and $b=e=4,$ and if $c=4,$ then $b,e \in \{3,4\}.$ By Lemma~\ref{submatrix}, the subgraph must have $\lambda_5 = -2.$ Checking each of the cases, only $(a,b,c,d,e,f)=(2,3,4,3,3,2)$ satisfies this. However, $a=2$ implies that there exists $x\in V(G)$ such that $v_1$ and $v_4$ are both adjacent to $x$. If $x$ is not adjacent to both $v_2$ and $v_3,$ then $G[\{x,v_1,v_2,v_3,v_4\}] = C_5.$ If $x$ is adjacent to $v_2$ but not $v_3,$ then $G[\{x,v_2,v_3,v_4\}] = C_4,$ and if $x$ is adjacent to $v_3$ but not $v_2,$ then $G[\{x,v_1,v_2,v_3\}] = C_4.$ If $x$ is adjacent to both $v_2$ and $v_3,$ then $G[\{x,v_1,v_2,v_3,v_4,v_5\}]=H_1$ or $H_2.$ All cases lead to contradictions.

\end{proof}

Since any graph of diameter at least 5 contains $P_6$ as a subgraph, this proves the following theorem.

\begin{theorem}
No graph $G$ cospectral to $T(a,b)$ can have diameter greater than 4.
\end{theorem}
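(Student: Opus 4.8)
The plan is to argue by contradiction, reducing the entire claim to the forbidden-subgraph result already established in Lemma~\ref{d4 forbidden}. Suppose $G$ is cospectral to $T(a,b)$ and has diameter $d(G) \geq 5$. By definition of diameter there exist two vertices $u, w \in V(G)$ with $d_{uw} = d(G) \geq 5$. I would fix a shortest path (geodesic) $\gamma$ from $u$ to $w$ in $G$ and read off its first six vertices $u = x_0, x_1, x_2, x_3, x_4, x_5$, where $x_i x_{i+1} \in E(G)$ for each $i$. Since $d_{uw} \geq 5$, these six vertices are genuinely distinct and $d_{x_0 x_5} = 5$ along $\gamma$.

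The central observation is that any geodesic is an induced path. Concretely, for indices $0 \leq i < j \leq 5$ the subpath of $\gamma$ from $x_i$ to $x_j$ is itself a shortest path, so $d_{x_i x_j} = j - i$; in particular $x_i$ and $x_j$ are adjacent in $G$ if and only if $j - i = 1$. Hence $G[\{x_0, x_1, x_2, x_3, x_4, x_5\}]$ has precisely the edges $x_i x_{i+1}$ and nothing else, which is exactly $P_6$. This exhibits $P_6$ as an induced subgraph of $G$, directly contradicting Lemma~\ref{d4 forbidden}, which asserts that $P_6$ is a forbidden subgraph of any $G$ cospectral to $T(a,b)$. Therefore no such $G$ can have diameter exceeding $4$, establishing the theorem.

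The point I would stress, rather than a computational obstacle, is that essentially all of the real work has already been discharged: the genuinely hard step is proving that $P_6$ (together with $H_1, \dots, H_7$) cannot appear as an induced subgraph, and that is precisely the content of Lemma~\ref{d4 forbidden}. What remains here is only the elementary structural remark that a diameter-$\geq 5$ graph necessarily contains a geodesic on six vertices and that every geodesic is induced. Consequently the theorem follows almost immediately, and the proof amounts to little more than formalizing the sentence preceding the statement. The only subtlety worth flagging explicitly is the "if and only if" in the adjacency analysis above: one must confirm not merely that consecutive vertices are adjacent, but also that non-consecutive vertices are \emph{non}-adjacent, since a chord $x_i x_j$ with $j - i \geq 2$ would both destroy the $P_6$ structure and contradict the minimality of $\gamma$.
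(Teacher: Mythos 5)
Your proof is correct and is essentially the paper's own argument: the paper disposes of this theorem in one sentence by noting that any graph of diameter at least $5$ contains $P_6$ as an induced subgraph, contradicting Lemma~\ref{d4 forbidden}. Your write-up merely makes explicit the standard fact that a geodesic is an induced path, which the paper leaves implicit.
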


For the rest of this section, we consider the case where $G$ has diameter $4.$

By the definition of a diameter 4 graph, there exist two vertices $x_1, x_5 \in V(G)$ such that $d_{x_1x_5}=4$. Suppose that $P=x_1 x_2 x_3 x_4 x_5$ is a path with length 4 in $G$. Let $X=\left\{x_1, x_2, x_3, x_4, x_5\right\}$, then $G[X]=P_{5}$. Denote by $V_{i}(i=0,1,2,3,4,5)$ the vertex subset of $V \backslash X$, consisting of vertices adjacent to $i$ vertices of $X$. Clearly $V \backslash X=\bigcup_{i=0}^{5} V_{i}.$

\begin{lemma}
\label{V2 to V5}
The sets $V_2, V_3, V_4,$ and $V_5$ are empty.
\end{lemma}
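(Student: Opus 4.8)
The plan is to argue by contradiction: suppose some vertex $x \in V \setminus X$ is adjacent to at least two vertices of $X$, i.e.\ $x \in V_2 \cup V_3 \cup V_4 \cup V_5$. Let $N \subseteq \{1,2,3,4,5\}$ be the set of indices $i$ with $x \sim x_i$, so $|N| \ge 2$. Since $G[X] = P_5$ is an induced path, the isomorphism type of $G[\{x\} \cup X]$ is determined entirely by $N$, and I would split into two cases according to whether $N$ is an interval (a block of consecutive integers) or not.

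First I would handle the case where $N$ is \emph{not} contiguous. Then there are indices $i < j$ in $N$ that are consecutive \emph{within} $N$ (no element of $N$ lies strictly between them) with $j - i \ge 2$. Because $x$ is adjacent to $x_i$ and $x_j$ but to none of $x_{i+1}, \dots, x_{j-1}$, and because $x_i x_{i+1} \cdots x_j$ is a chordless subpath of $P_5$, the set $\{x, x_i, x_{i+1}, \dots, x_j\}$ induces a cycle $C_{j-i+2}$. Here $j - i \in \{2,3,4\}$, so this cycle has length in $\{4,5,6\}$, which is forbidden by Lemma~\ref{cycles}, a contradiction. Hence $N$ must be an interval.

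It then remains to treat the case where $N$ is an interval of length at least $2$. Up to the reflection symmetry $x_k \mapsto x_{6-k}$ of $P_5$, the possibilities are $\{1,2\}, \{2,3\}, \{1,2,3\}, \{2,3,4\}, \{1,2,3,4\}$, and $\{1,2,3,4,5\}$. For each, $G[\{x\} \cup X]$ is exactly the graph obtained from $P_5$ by joining a sixth vertex $x$ to precisely the vertices indexed by $N$; matching these against the figures, these six intervals yield $H_6, H_5, H_4, H_3, H_2$, and $H_1$ respectively, while the reflected intervals $\{4,5\}, \{3,4\}, \{3,4,5\}, \{2,3,4,5\}$ give the same graphs. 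Every one of $H_1, \dots, H_6$ is forbidden by Lemma~\ref{d4 forbidden}, so each case is a contradiction. Therefore no such $x$ exists, and $V_2, V_3, V_4, V_5$ are all empty.

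I expect the substantive content to be the two observations above, namely that a gap in $N$ forces a short forbidden cycle and that a contiguous $N$ of size $\ge 2$ forces one of the forbidden graphs $H_i$, while the main thing to get right is the bookkeeping: verifying that in each case the induced subgraph is \emph{exactly} the claimed $C_n$ or $H_i$. This hinges on $G[X]$ being the induced path $P_5$, so that no unexpected chords appear, and, in the cycle case, on choosing $i,j$ to be consecutive neighbors so that $x$ has no neighbor in the interior of the arc. The reflection symmetry of $P_5$ is what keeps the interval case to six subcases rather than ten.
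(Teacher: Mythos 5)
Your proof is correct, and its engine is the same as the paper's: a gap between consecutive neighbors of $x$ on the path forces an induced $C_{j-i+2}$ with $4 \le j-i+2 \le 6$, ruled out by Lemma~\ref{cycles}, while a contiguous neighbor set of size at least two forces one of the forbidden graphs of Lemma~\ref{d4 forbidden}. That is precisely how the paper handles $V_2$ and $V_3$ (its choice of $a<b$ with $b-a$ maximal plays the role of your consecutive-in-$N$ pair). The genuine difference is at $V_4$ and $V_5$: you fold them into the same case analysis, matching $N=\{1,2,3,4\}$ and its reflection to $H_2$ and $N=\{1,2,3,4,5\}$ to $H_1$, whereas the paper dispatches these two sets by a purely metric argument --- a vertex adjacent to four or five vertices of $X$ yields a path of length $2$ or $3$ from $x_1$ to $x_5$, contradicting $d_{x_1x_5}=4$ --- and never touches $H_1,H_2$ in this lemma. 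The paper's shortcut is more elementary, but your version has a concrete advantage: it uses only the hypothesis $G[X]\cong P_5$ and not the assumption $d_{x_1x_5}=4$. That stronger form is what is actually needed later, in the proof of Lemma~\ref{leaves}, where Lemma~\ref{V2 to V5} is reapplied to an induced $P_5$ inside a diameter-$3$ graph, whose endpoints cannot be at distance $4$; your argument covers that reuse verbatim, while the paper's $V_4,V_5$ step requires exactly the patch you supply. Since $H_1$ and $H_2$ are proved forbidden in Lemma~\ref{d4 forbidden} regardless, your route costs nothing extra.
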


\begin{proof}
First, we prove $V_5$ is empty. Suppose otherwise, so there exists $v\in V_5.$ Then $x_1vx_5$ is a path of length $2$ from $x_1$ to $x_5,$ contradicting that $d_{x_1x_5}=4$.

Next, we prove $V_4$ is empty. Suppose otherwise, so there exists $v\in V_4.$ Then $v$ is adjacent to at least one of $x_1$ and $x_5$. Without loss of generality, $v$ is adjacent to $x_1.$ Either $v$ is adjacent to $x_5,$ so $x_1vx_5$ is a path of length $2,$ or $v$ is not adjacent to $x_5$ and $x_1vx_4x_5$ is a path of length $3.$ Both contradict that $d_{x_1x_5}=4$.

Next, we prove $V_3$ is empty. Suppose otherwise, so there exists $v\in V_3.$ There exists $1\leq a<b \leq 5$ such that $v$ is adjacent to $x_a, x_b$ and not adjacent to $x_i$ for any $a < i < b.$ Pick some $a,b$ so that $b-a$ is maximal. Then $G[\{v\}\cup\{x_{j} | a\leq j\leq b\}] = C_{b-a+2}.$ If $b-a\geq 2,$ since $4\leq b-a+2\leq 6$, we have a contradiction by Lemma~\ref{cycles}. Otherwise, $b=a+1,$ so $G[\{v\}\cup X] = H_3$ or $H_4,$ a contradiction by Lemma~\ref{d4 forbidden}.

Lastly, we prove $V_2$ is empty. Suppose otherwise, so there exists $v\in V_2.$ Let $v$ be adjacent to $x_a$ and $x_b$, where $a<b.$ Then $G[\{v\}\cup\{x_{j} | a\leq j\leq b\}] = C_{b-a+2}.$ If $b-a\geq 2,$ then $4\leq b-a+2\leq 6$, and we have a contradiction by Lemma~\ref{cycles}. Otherwise, $b=a+1,$ so $G[\{v\}\cup X] = H_5$ or $H_6,$ a contradiction by Lemma~\ref{d4 forbidden}.

Therefore, we have that $V_2, V_3, V_4, V_5$ are all empty.
\end{proof}

\begin{lemma}
\label{V1}
For all $v\in V_1,$ $v$ is adjacent to either $x_2$ or $x_4.$ Furthermore, for $u,v \in V_1,$ $u$ cannot be adjacent to $v.$
\end{lemma}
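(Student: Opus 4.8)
The plan is to prove the two assertions separately, and in both cases to rely on the forbidden induced subgraphs already established in Lemmas~\ref{cycles} and \ref{d4 forbidden} rather than on any fresh eigenvalue computation. Throughout I would keep in mind that $v \in V_1$ means $v$ is adjacent to exactly one vertex of $X = \{x_1, x_2, x_3, x_4, x_5\}$, and that $G[X] = P_5$ with the labelling $x_1 x_2 x_3 x_4 x_5$.

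For the first claim I would fix $v \in V_1$ and eliminate each of the three ``wrong'' attachment points in turn. If $v$ were adjacent only to $x_1$, then $G[\{v\} \cup X]$ would be the induced path $v\,x_1\,x_2\,x_3\,x_4\,x_5 \cong P_6$, and by the obvious left–right symmetry the same occurs if $v$ is adjacent only to $x_5$. If $v$ were adjacent only to the central vertex $x_3$, then $G[\{v\} \cup X] \cong H_7$. All of these are forbidden by Lemma~\ref{d4 forbidden}, so the unique neighbour of $v$ in $X$ must be $x_2$ or $x_4$.

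For the second claim I would suppose for contradiction that $u, v \in V_1$ are adjacent. By the first claim each of $u, v$ attaches to $x_2$ or $x_4$, giving two cases. If they attach to different vertices, say $u \sim x_2$ and $v \sim x_4$, then among $\{u, x_2, x_3, x_4, v\}$ the only edges are $u x_2,\, x_2 x_3,\, x_3 x_4,\, x_4 v,\, v u$ (every other pair is a non-edge because $u,v$ lie in $V_1$ and $G[X]=P_5$), so $G[\{u,v,x_2,x_3,x_4\}] \cong C_5$, which is forbidden by Lemma~\ref{cycles}.

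The remaining case, where $u$ and $v$ attach to the same vertex, say both to $x_2$, is the step I expect to be the main obstacle: here the relevant configuration contains the triangle $u v x_2$ rather than an induced cycle, and one can check that the small distance matrices it produces do not on their own violate Lemma~\ref{submatrix}, so a direct spectral argument stalls. The clean way around this is to shift the base path: the set $X' = \{u, x_2, x_3, x_4, x_5\}$ again induces a $P_5$, now with $u$ as the pendant end vertex, and relative to $X'$ the vertex $v$ is adjacent to exactly the two vertices $u$ and $x_2$. Equivalently $G[\{v\}\cup X'] \cong H_6$, contradicting Lemma~\ref{d4 forbidden}; viewed through the earlier machinery, this simply says $v$ would lie in $V_2(X')$, contradicting the emptiness proved in Lemma~\ref{V2 to V5}, whose argument for $V_2$ uses only that the base set induces $P_5$. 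The case where both attach to $x_4$ is symmetric, which would complete the proof.
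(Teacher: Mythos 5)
Your proof is correct and follows essentially the same route as the paper: the first claim is handled by the identical forbidden subgraphs ($P_6$ for attachment at $x_1$ or $x_5$, $H_7$ for $x_3$), and the second claim by the same two cases ($C_5$ when $u,v$ attach to different vertices, $H_6$ when they attach to the same one). Your ``shift the base path'' framing of the same-attachment case is just a repackaging of the paper's direct observation that $G[\{u,v,x_2,x_3,x_4,x_5\}] \cong H_6$, so there is no substantive difference.
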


\begin{proof}
Suppose for some $v\in V_1$ that $v$ is adjacent to $x_1$ or $x_5$. Then $G[\{v\}\cup X] = P_6,$ a contradiction by Lemma~\ref{d4 forbidden}.

Suppose for some $v\in V_1$ that $v$ is adjacent to $x_3.$ Then $G[\{v\}\cup X] = H_7,$ which is a contradiction by Lemma~\ref{d4 forbidden}.

Thus for all $v \in V_1,$ $v$ is adjacent to either $x_2$ or $x_4.$

Consider $u,v \in V_1.$ Without loss of generality $u$ is adjacent to $x_2.$ Suppose now that $u$ is adjacent to $v.$

If $v$ is adjacent to $x_2,$ then $G[\{u,v,x_2,x_3,x_4,x_5\}]=H_6$, a contradiction by Lemma~\ref{d4 forbidden}.

Otherwise, $v$ is adjacent to $x_4$. Since $u,v \in V_1,$ we have $G[\{v,u,x_2,x_3,x_4\}] = C_5,$ a contradiction by Lemma~\ref{cycles}. 

Thus, $u$ is not adjacent to $v.$
\end{proof}

\begin{lemma}
\label{V0}
The set $V_0$ is empty.
\end{lemma}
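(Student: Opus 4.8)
The plan is to show $V_0$ must be empty by exploiting the connectivity of $G$ together with the structural restrictions already established. By Lemma~\ref{V2 to V5} we know $V_2, V_3, V_4, V_5 = \emptyset$, so every vertex of $G$ lies in $X \cup V_0 \cup V_1$. Suppose, for contradiction, that $V_0 \neq \emptyset$. Since $G$ is connected, I will first argue that there must exist a vertex $q \in V_0$ adjacent to a vertex $p \in V_1$, and then exhibit an induced copy of $P_6$ on $\{q,p\}$ together with four of the $x_i$, contradicting Lemma~\ref{d4 forbidden}.

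To locate such a pair, pick any $w \in V_0$ and take a shortest path in $G$ from $w$ to the set $X$, say $w = u_0, u_1, \dots, u_m$ with $u_m \in X$ and $u_0, \dots, u_{m-1} \notin X$. Because $w \in V_0$ is adjacent to no vertex of $X$, we have $m \geq 2$. The vertex $u_{m-1}$ is adjacent to $u_m \in X$ but is not itself in $X$; since $V_2, \dots, V_5$ are empty, it is adjacent to exactly one vertex of $X$, so $u_{m-1} \in V_1$. Set $p = u_{m-1}$. Now consider $u_{m-2}$, which is adjacent to $p$. It cannot lie in $X$, and by Lemma~\ref{V1} it cannot lie in $V_1$ (no two $V_1$ vertices are adjacent), so $u_{m-2} \in V_0$. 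Set $q = u_{m-2}$; then $q \in V_0$, $p \in V_1$, and $q$ is adjacent to $p$.

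By Lemma~\ref{V1}, $p$ is adjacent to either $x_2$ or $x_4$; without loss of generality (the path $x_1 x_2 x_3 x_4 x_5$ is symmetric under reversal) assume $p$ is adjacent to $x_2$. I then claim that $G[\{q, p, x_2, x_3, x_4, x_5\}]$ is an induced $P_6$, namely the path $q - p - x_2 - x_3 - x_4 - x_5$. The step to verify is that this subgraph is induced, i.e.\ that there are no additional edges: $q$ is adjacent to none of $x_2, x_3, x_4, x_5$ because $q \in V_0$; $p$ is adjacent to none of $x_3, x_4, x_5$ because $p \in V_1$ has $x_2$ as its unique neighbor in $X$; and the pairs $x_2 x_4$, $x_2 x_5$, $x_3 x_5$ are non-edges because $G[X] = P_5$. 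This yields a forbidden induced $P_6$, contradicting Lemma~\ref{d4 forbidden}, and hence $V_0 = \emptyset$. The only delicate point is the connectivity argument producing the adjacent pair $q \in V_0$, $p \in V_1$; once that is in hand, the induced-$P_6$ verification is routine.
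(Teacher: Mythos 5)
Your proof is correct and follows essentially the same approach as the paper: locate an adjacent pair $q \in V_0$, $p \in V_1$, assume WLOG that $p$ is adjacent to $x_2$, and exhibit $G[\{q,p,x_2,x_3,x_4,x_5\}] = P_6$ as a forbidden induced subgraph via Lemma~\ref{d4 forbidden}. The only difference is that you make explicit, via the shortest-path argument and Lemma~\ref{V1}, the connectivity step that the paper compresses into the single assertion that some $v \in V_0$ must be adjacent to some $u \in V_1$ because $V_k = \emptyset$ for $k > 1$.
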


\begin{proof}
If $V_0$ is not empty, some $v\in V_0$ must be adjacent to some $u\in V_1$ because all $V_k$ for $k > 1$ are empty. Without loss of generality, let $u$ be adjacent to $x_2$. Then $G[\{v,u,x_2,x_3,x_4,x_5\}]=P_6$, so by Lemma~\ref{d4 forbidden}, we have a contradiction.

Thus $V_0$ is empty.
\end{proof}

\begin{theorem}
\label{d4 full}
If $G$ and $T(a,b)$ are cospectral and $G$ has diameter $4,$ then $G \cong T(a,b).$
\end{theorem}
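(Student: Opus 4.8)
The plan is to observe that Lemmas~\ref{V2 to V5}, \ref{V1}, and \ref{V0} have already pinned down the entire structure of $G$, so that all that remains is to recognize this structure as an extended double star and then invoke Theorem~\ref{S(a',b')}. Concretely, since $G$ has diameter $4$, I fix the path $P = x_1x_2x_3x_4x_5$ realizing $d_{x_1x_5}=4$ and set $X=\{x_1,\dots,x_5\}$; as already noted, any chord of $P$ inside $X$ would create a shorter $x_1$--$x_5$ path, so $G[X]=P_5$ and in particular $x_3$ has no neighbor in $X$ other than $x_2,x_4$.

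By Lemma~\ref{V0} together with Lemma~\ref{V2 to V5}, we have $V_0=V_2=V_3=V_4=V_5=\emptyset$, so $V(G)=X\cup V_1$. By Lemma~\ref{V1}, every vertex of $V_1$ is adjacent to exactly one of $x_2,x_4$ (and to no other vertex of $X$, by the definition of $V_1$), and $V_1$ is an independent set. Hence the only edges of $G$ are those of the path $P$ together with pendant edges from each $V_1$-vertex to either $x_2$ or $x_4$.

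I would then partition $V_1 = A \cup B$ into the vertices adjacent to $x_2$ and the vertices adjacent to $x_4$, and set $k=|A|$, $\ell=|B|$. Now $x_2$ is the center of a star $K_{1,k+1}$ whose leaves are $x_1$ and the $k$ vertices of $A$; symmetrically $x_4$ is the center of $K_{1,\ell+1}$ with leaves $x_5$ and the $\ell$ vertices of $B$; and $x_3$ is a single vertex joined to both centers $x_2$ and $x_4$. By the definition of an extended double star this is exactly $G\cong T(k+1,\ell+1)$. Since $G$ is cospectral to $T(a,b)$, Theorem~\ref{S(a',b')} forces $T(k+1,\ell+1)\cong T(a,b)$, i.e.\ $G\cong T(a,b)$, as desired.

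The conceptual work for this theorem is carried entirely by the preceding forbidden-subgraph and emptiness lemmas, so there is no serious obstacle remaining; the argument is essentially bookkeeping. The only points requiring care are confirming that $G[X]=P_5$ has no chords (so $x_3$ is joined only to $x_2,x_4$) and that each $V_1$-vertex contributes exactly one pendant edge, which guarantees that the leaf counts $k+1$ and $\ell+1$ are correct; both follow immediately from the definition of the $V_i$ and from Lemma~\ref{V1}. Matching vertex counts, namely $|V(G)|=5+k+\ell=(k+1)+(\ell+1)+3$, provides a useful consistency check on the identification before appealing to Theorem~\ref{S(a',b')}; note also that the degenerate case $k=\ell=0$ is handled uniformly, since it yields $G\cong T(1,1)=P_5$.
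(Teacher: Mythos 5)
Your proof is correct and follows exactly the paper's own argument: the paper likewise combines Lemmas~\ref{V2 to V5}, \ref{V1}, and \ref{V0} to conclude $G = T(a',b')$ for some $a',b'$ and then invokes Theorem~\ref{S(a',b')}. You simply spell out the bookkeeping (the partition $V_1 = A \cup B$ and the identification $G \cong T(k+1,\ell+1)$) that the paper leaves implicit.
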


\begin{proof}
If $G$ satisfies Lemmas~\ref{V2 to V5}, \ref{V1}, and \ref{V0}, then $G=T(a',b')$ for some $a',b'$. By Theorem~\ref{S(a',b')}, we are done.
\end{proof}

We now move on to graphs of lower diameter.

\section{Diameter 3 Graphs}
In this section we consider graphs $G$ cospectral to $T(a,b)$ with diameter $3.$

There exists vertices $x_1,x_4$ such that $d_{x_1,x_4}=3.$ Suppose $x_1x_2x_3x_4$ is a path of length 3 in $G.$ Denote $X=\{x_1,x_2,x_3,x_4\}.$ Note that $G[X]=P_4$.

\begin{figure}[!h]
    \centering
    \begin{minipage}{.4\textwidth}
    \begin{tikzpicture}[node distance={15mm}, thin, main/.style = {draw, circle}]
\node[main] (1) {$v_1$}; 
\node[main] (2) [right of=1] {$v_2$};
\node[main] (3) [right of=2] {$v_3$}; 
\node[main] (4) [right of=3] {$v_4$};
\node[main] (5) [above right of=2] {$v_5$}; 
\draw (1) -- (2);
\draw (2) -- (3);
\draw (3) -- (4);
\draw (5) -- (1);
\draw (5) -- (2);
\draw (5) -- (3);
\draw (5) -- (4);
\end{tikzpicture}
    \caption{$F_1$}
    \label{fig:f1}
    \end{minipage}%
    \begin{minipage}{.4\textwidth}
    \begin{tikzpicture}[node distance={15mm}, thin, main/.style = {draw, circle}]
\node[main] (1) {$v_1$}; 
\node[main] (2) [right of=1] {$v_2$};
\node[main] (3) [right of=2] {$v_3$}; 
\node[main] (4) [right of=3] {$v_4$};
\node[main] (5) [above right of=2] {$v_5$}; 
\draw (1) -- (2);
\draw (2) -- (3);
\draw (3) -- (4);
\draw (1) -- (5);
\draw (2) -- (5);
\draw (3) -- (5);
\end{tikzpicture}
    \caption{$F_2$}
    \label{fig:f2}
    \end{minipage}
\end{figure}
\begin{figure}[!h]
    \centering
    \begin{minipage}{.4\textwidth}
    \begin{tikzpicture}[node distance={15mm}, thin, main/.style = {draw, circle}]
\node[main] (1) {$v_1$}; 
\node[main] (2) [right of=1] {$v_2$};
\node[main] (3) [right of=2] {$v_3$}; 
\node[main] (4) [right of=3] {$v_4$};
\node[main] (5) [above right of=2] {$v_5$};
\draw (1) -- (2);
\draw (2) -- (3);
\draw (3) -- (4);
\draw (1) -- (5);
\draw (2) -- (5);
\end{tikzpicture}
    \caption{$F_3$}
    \label{fig:f3}
    \end{minipage}%
    \begin{minipage}{.4\textwidth}
    \hspace{6em}\begin{tikzpicture}[node distance={15mm}, thin, main/.style = {draw, circle}]
\node[main] (1) {$v_1$}; 
\node[main] (2) [right of=1] {$v_2$};
\node[main] (3) [above of=2] {$v_3$}; 
\node[main] (4) [above of=1] {$v_4$};
\draw (1) -- (2);
\draw (1) -- (3);
\draw (1) -- (4);
\draw (2) -- (3);
\draw (2) -- (4);
\draw (3) -- (4);
\end{tikzpicture}
    \caption{$K_4$}
    \label{fig:k4}
    \end{minipage}
\end{figure}
\begin{figure}[!h]
    \centering
    \begin{tikzpicture}[node distance={20mm}, thin, main/.style = {draw, circle}]
\node[main] (1) {$v_1$}; 
\node[main] (2) [right of=1] {$v_2$};
\node[main] (3) [right of=2] {$v_3$}; 
\node[main] (4) [right of=3] {$v_4$};
\node[main] (5) [above left of=1] {$v_5$};
\node[main] (6) [right of=5] {$v_6$};
\node[main] (7) [right of=6] {$v_7$};
\node[main] (8) [right of=7] {$v_8$};
\node[main] (9) [right of=8] {$v_9$};
\node[main] (10) [right of=9] {$v_{10}$};
\draw (1) -- (2);
\draw (2) -- (3);
\draw (3) -- (4);
\draw (2) -- (5);
\draw (3) -- (5);
\draw (2) -- (6);
\draw (3) -- (6);
\draw (2) -- (7);
\draw (3) -- (7);
\draw (2) -- (8);
\draw (3) -- (8);
\draw (2) -- (9);
\draw (3) -- (9);
\draw (2) -- (10);
\draw (3) -- (10);
\end{tikzpicture}
    \caption{$F_4$}
    \label{fig:f7}
\end{figure}

\begin{lemma}
\label{d3 forbidden}
If $G$ and $T(a,b)$ are cospectral, then $F_1, F_2, F_3, K_4$ and $F_4$ are forbidden.
\end{lemma}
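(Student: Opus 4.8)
The plan is to forbid each of the five graphs by the same device used in Lemma~\ref{d4 forbidden}: assume it occurs as an induced subgraph of $G$, determine the principal submatrix $D_G[V(\cdot)]$ that this forces, and exhibit a violation of Lemma~\ref{submatrix}. The key point is that since each graph is an \emph{induced} subgraph, any two of its non-adjacent vertices have distance at least $2$ in $G$ and at most their distance in the drawn graph. Hence every entry of $D_G[V(\cdot)]$ is pinned down except for the pairs drawn at distance $3$, each of which I treat as an unknown taking a value in $\{2,3\}$.

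Three of the graphs are quick. For $K_4$ every pair is adjacent, so $D_G[V(K_4)]=J-I$ with spectrum $3,-1,-1,-1$; then $\lambda_4=-1>-1.5774$, contradicting Lemma~\ref{submatrix}. The graph $F_1$ has diameter $2$, so it contains no distance-$3$ pair and $D_G[V(F_1)]=D(F_1)$ is completely fixed; using the symmetry $v_1\leftrightarrow v_4,\ v_2\leftrightarrow v_3$ to split the spectrum into a $2\times 2$ and a $3\times3$ block, I would check that $\lambda_4$ lies just above $-1.5774$, again contradicting Lemma~\ref{submatrix}. For $F_2$ and $F_3$ the only unknowns are the one (resp.\ two) distances drawn as $3$; I would enumerate the two (resp.\ four) resulting matrices and verify that one of the conditions $\lambda_2<0,\ \lambda_3<-0.4226,\ \lambda_4<-1.5774$ fails in each case.

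Whenever enumerating these cases forces a drawn distance of $3$ down to $2$, I can instead argue structurally, exactly as in the $H_7$ and $P_6$ parts of Lemma~\ref{d4 forbidden}: the collapse produces a vertex $x\in V(G)$ adjacent to both endpoints, and a short case split on which interior path-vertices $x$ is adjacent to yields an induced $C_4$ or $C_5$ (forbidden by Lemma~\ref{cycles}) or one of $H_1,\dots,H_7$ (forbidden by Lemma~\ref{d4 forbidden}). This gives a uniform way to dispose of the ``collapsed'' branches without relying on borderline eigenvalue estimates.

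The genuinely new case is $F_4$. Here the six vertices $v_5,\dots,v_{10}$ are pairwise at distance $2$ and have identical distances to every other vertex, so each $e_{v_i}-e_{v_j}$ is an eigenvector of eigenvalue $-2$; thus $-2$ has multiplicity at least $5$, and the remaining five eigenvalues are those of the $5\times5$ quotient of the equitable partition $\{v_1\},\{v_2\},\{v_3\},\{v_4\},\{v_5,\dots,v_{10}\}$, which carries the single unknown $d(v_1,v_4)\in\{2,3\}$. For the natural value $d(v_1,v_4)=3$ this quotient turns out to be singular, so $0$ is an eigenvalue; merging it with the five copies of $-2$ makes $\lambda_2=0$, contradicting $\lambda_2<0$. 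For $d(v_1,v_4)=2$ the quotient spectrum instead pushes a non-$(-2)$ eigenvalue into position $9$ (and drives $\lambda_3$ above $-0.4226$), again contradicting Lemma~\ref{submatrix}. I expect $F_4$ to be the main obstacle, both because it requires this equitable-partition reduction rather than a direct $5$- or $6$-vertex check and because several of the eigenvalue comparisons (most notably $\lambda_4$ for $F_1$) are numerically tight; these should be settled by locating the relevant roots exactly via the Intermediate Value Theorem rather than by rounded decimals.
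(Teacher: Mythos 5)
Your proposal follows essentially the same route as the paper: force the entries of the principal submatrix $D_G[V(\cdot)]$ (with unknowns only at pairs drawn at distance $3$, each in $\{2,3\}$), contradict Lemma~\ref{submatrix} case by case, and fall back on a structural argument when an eigenvalue check does not eliminate a ``collapsed'' branch. Your refinements are sound: $D_G[V(F_1)]$ is indeed completely forced since $F_1$ has diameter $2$ (the paper redundantly enumerates $a\in\{2,3\}$ there), your caution about the tightness of that case is justified ($\lambda_4(D(F_1))\approx-1.571$ versus the bound $-1-1/\sqrt{3}\approx-1.5774$), and your equitable-partition treatment of $F_4$ is a cleaner computation of exactly what the paper checks on the full $10\times10$ matrix, reaching identical conclusions ($\lambda_2=0$ when $d_G(v_1,v_4)=3$, e.g.\ via the kernel vector $(-1,-3,-3,-1,1,\dots,1)$; two quotient eigenvalues below $-2$ when $d_G(v_1,v_4)=2$, so $\lambda_9\neq-2$).

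One concrete correction is needed in your structural fallback. You claim the case split on the new vertex $x$ always produces ``an induced $C_4$ or $C_5$ or one of $H_1,\dots,H_7$,'' but in the subcase where $x$ is adjacent to both interior path vertices, the induced graph on $\{x,v_1,v_2,v_3,v_4\}$ is $F_1$ itself --- not a cycle, and not any $H_i$ (those are six-vertex graphs containing an induced $P_5$, which does not exist here). This is not cosmetic, because the fallback is load-bearing precisely where you expect to need it: for $F_3$ the paper finds that the branches with $d_G(v_1,v_4)=2$ are \emph{not} killed by the eigenvalue test, and it closes them by citing $C_5$, $C_4$, $C_4$, or $F_1$. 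Since your own ordering establishes $F_1$ as forbidden first (by the direct eigenvalue check), the repair is immediate: cite $F_1$ in that subcase, exactly as the paper does.
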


\begin{proof}
Suppose $F_1$ is an induced subgraph of $G$. Then $D_G[V(F_1)] = \begin{pmatrix} 0 & 1 & 2 & a & 1 \\ 1 & 0 & 1 & 2 & 1 \\ 2 & 1 & 0 & 1 & 1 \\ a & 2 & 1 & 0 & 1 \\ 1 & 1 & 1 & 1 & 0 \end{pmatrix}$ where $a \in \{ 2, 3 \}$. In both cases, we have a contradiction by Lemma~\ref{submatrix}.

Suppose $F_2$ is an induced subgraph of $G$. Then $D_G[V(F_2)] = \begin{pmatrix} 0 & 1 & 2 & a & 1 \\ 1 & 0 & 1 & 2 & 1 \\ 2 & 1 & 0 & 1 & 1 \\ a & 2 & 1 & 0 & 2 \\ 1 & 1 & 1 & 2 & 0 \end{pmatrix}$ where $a \in \{ 2,3 \}$. Again, in both cases we have a contradiction by Lemma~\ref{submatrix}.

Suppose $F_3$ is an induced subgraph of $G$. Then $D_G[V(F_3)] = \begin{pmatrix} 0 & 1 & 2 & a & 1 \\ 1 & 0 & 1 & 2 & 1 \\ 2 & 1 & 0 & 1 & 2 \\ a & 2 & 1 & 0 & b \\ 1 & 1 & 2 & b & 0 \end{pmatrix}$ where $a,b \in \{ 2,3 \}$. We get that if $(a,b) = (3,2)$ or $(a,b) = (3,3)$, we have a contradiction by Lemma~\ref{submatrix}. Both of the other cases have $a = 2$, which implies that there is some vertex $d$ such that $v_1$ and $v_4$ are both adjacent to $d$. However, then one of the following four is true: $G[\{ v_1, v_2, v_3, v_4, d \}] = C_5$; $G[\{ v_1, v_2, v_3, d \}] = C_4$; $G[\{ v_2, v_3, v_4, d \}] = C_4$; $G[\{d, v_1, v_2, v_3, v_4\}] = F_1.$ All cases lead to contradictions.

Suppose $K_4$ is an induced subgraph of $G$. Then $D_G[V(K_4)] = D(K_4) =  \begin{pmatrix} 0 & 1 & 1 & 1 \\ 1 & 0 & 1 & 1 \\ 1 & 1 & 0 & 1 \\ 1 & 1 & 1 & 0 \end{pmatrix}$. We get that $\lambda_4(K_4) = -1,$ a contradiction by Lemma~\ref{submatrix}.

Suppose $F_4$ is an induced subgraph of $G$. Then $D_G[V(F_4)] =  \begin{pmatrix} 0 & 1 & 2 & a & 2 & 2 & 2 & 2 & 2 & 2 \\ 1 & 0 & 1 & 2 & 1 & 1 & 1 & 1 & 1 & 1 \\ 2 & 1 & 0 & 1 & 1 & 1 & 1 & 1 & 1 & 1 \\ a & 2 & 1 & 0 & 2 & 2 & 2 & 2 & 2 & 2 \\ 2 & 1 & 1 & 2 & 0 & 2 & 2 & 2 & 2 & 2 \\ 2 & 1 & 1 & 2 & 2 & 0 & 2 & 2 & 2 & 2 \\ 2 & 1 & 1 & 2 & 2 & 2 & 0 & 2 & 2 & 2 \\ 2 & 1 & 1 & 2 & 2 & 2 & 2 & 0 & 2 & 2 \\ 2 & 1 & 1 & 2 & 2 & 2 & 2 & 2 & 0 & 2 \\ 2 & 1 & 1 & 2 & 2 & 2 & 2 & 2 & 2 & 0 \end{pmatrix},$ where $a\in \{2,3\}.$ If $a=2$,  we find $\lambda_9 \neq -2.$ If $a=3,$ $\lambda_2 = 0.$ Both contradict Lemma~\ref{submatrix}.
\end{proof}

Denote by $V_{i}(i=0,1,2,3,4)$ the vertex subset of $V \backslash X$, consisting of vertices adjacent to $i$ vertices of $X$. Clearly $V \backslash X=\bigcup_{i=0}^{4} V_{i} .$

\begin{lemma}
\label{leaves}
If $v_1, v_2, v_3, v_4$ are vertices of $G$ with $G[v_1,v_2,v_3,v_4]=P_4,$ then $v_1$ and $v_4$ are leaves, namely, they have degree $1.$
\end{lemma}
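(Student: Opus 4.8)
The plan is to prove only that $v_1$ is a leaf; the claim for $v_4$ then follows by applying the identical argument to the reversed induced path $v_4 v_3 v_2 v_1$. Suppose for contradiction that $v_1$ has a neighbor $w\neq v_2$. Since $G[\{v_1,v_2,v_3,v_4\}]=P_4$ forces $v_1\not\sim v_3$ and $v_1\not\sim v_4$, the vertex $w$ lies outside $\{v_1,v_2,v_3,v_4\}$.

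First I would record the following preliminary fact, which drives everything: \emph{the two endpoints of any induced $P_4$ in $G$ have no common neighbor, and hence, since $G$ has diameter $3$, are at distance exactly $3$.} Indeed, if $z$ were a common neighbor of the endpoints $a,d$ of an induced path $a\text{-}b\text{-}c\text{-}d$, then $z\notin\{b,c\}$, and the four possibilities for whether $z$ is adjacent to $b$ and to $c$ produce, respectively, an induced $C_5$, $C_4$, $C_4$, or $F_1$, each forbidden by Lemma~\ref{cycles} or Lemma~\ref{d3 forbidden}.

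Applying this to $v_1 v_2 v_3 v_4$ gives $d_{v_1 v_4}=3$, whence $w\not\sim v_4$ (else $v_1 w v_4$ has length $2$). I then split on the adjacencies of $w$ to $v_2$ and $v_3$: if $w\sim v_2$ and $w\sim v_3$ then $G[\{w,v_1,v_2,v_3,v_4\}]=F_2$; if $w\sim v_2$ but $w\not\sim v_3$ then it is $F_3$; and if $w\sim v_3$ but $w\not\sim v_2$ then $w v_1 v_2 v_3$ is an induced $C_4$. All are forbidden, so the only surviving configuration is $w\sim v_1$ alone, in which $G[\{w,v_1,v_2,v_3,v_4\}]=P_5$.

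This pendant case is where the real work lies, and it is the step I expect to be the main obstacle, since $P_5$ is not itself forbidden in a diameter-$3$ graph. Here I would invoke the diameter bound. As $\{w,v_1,v_2,v_3\}$ is again an induced $P_4$, the preliminary fact gives $d_{w v_3}=3$. A shortest $w$–$v_4$ path has length at most $3$, and its first edge $wz$ cannot have $z=v_1$, since otherwise the remainder would be a $v_1$–$v_4$ path of length at most $2$, contradicting $d_{v_1 v_4}=3$; as $w$ is adjacent to none of $v_2,v_3,v_4$, the neighbor $z$ lies outside $\{v_1,v_2,v_3,v_4\}$. From $d_{w v_3}=3$ we get $z\not\sim v_3$, and a final case analysis on the remaining adjacencies of $z$ closes the argument: if $z\sim v_1$ then $G[\{z,w,v_1,v_2,v_3\}]$ is $F_2$ or $F_3$ according as $z\sim v_2$ or not; if $z\not\sim v_1$ but $z\sim v_2$ then $z w v_1 v_2$ is an induced $C_4$; and if $z$ is adjacent to none of $v_1,v_2,v_3$, then $G[\{z,w,v_1,v_2,v_3,v_4\}]$ is an induced $C_6$ when $z\sim v_4$ and an induced $P_6$ when $z\not\sim v_4$. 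Each possibility is forbidden by Lemma~\ref{cycles}, Lemma~\ref{d3 forbidden}, or Lemma~\ref{d4 forbidden}, giving the desired contradiction and completing the proof.
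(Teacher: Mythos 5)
Your proof is correct, and while the first half runs parallel to the paper, your handling of the decisive case is genuinely different. The paper's proof also reduces to two situations: if the new neighbor $u$ of $v_1$ touches $\{v_2,v_3,v_4\}$, a maximal-gap argument produces $C_4$, $C_5$, $F_1$, $F_2$, or $F_3$ (your preliminary fact plus the $F_2/F_3/C_4$ split accomplishes the same thing); the real divergence is the pendant case, where $G[\{u,v_1,v_2,v_3,v_4\}]=P_5$. There the paper goes global: it declares $uv_1v_2v_3v_4$ a path of length $4$, re-runs the entire diameter-$4$ classification (Lemmas~\ref{V2 to V5}, \ref{V1}, \ref{V0}) on this $P_5$, and concludes $G\cong T(a',b')$, which has diameter $4$, contradicting diameter $3$. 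You stay local: the diameter-$3$ hypothesis gives a $w$--$v_4$ path of length at most $3$, and its second vertex $z$ forces one of $F_2$, $F_3$, $C_4$, $C_6$, or $P_6$ as an induced subgraph, all forbidden by Lemmas~\ref{cycles}, \ref{d3 forbidden}, and \ref{d4 forbidden}. Each route has a cost and a benefit. The paper's reuse of machinery is economical, but it is slightly delicate: the proofs that $V_5$ and $V_4$ are empty in Lemma~\ref{V2 to V5} explicitly invoke $d_{x_1x_5}=4$, which is unavailable when the $P_5$ lives in a diameter-$3$ graph, so that application strictly needs a patch (emptiness of $V_5$ and $V_4$ can instead be recovered from $H_1$, $H_2$, and $C_4$ being forbidden). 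Your argument sidesteps this subtlety entirely and is self-contained within the diameter-$3$ section, at the price of one extra layer of case analysis; your preliminary fact, that the endpoints of any induced $P_4$ have no common neighbor and hence lie at distance exactly $3$, is also a cleanly reusable observation.
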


\begin{proof}
Assume for the sake of contradiction that some vertex $u$ is adjacent to $v_1.$ 

Suppose $u$ is adjacent to some vertex in $\{v_2,v_3,v_4\}.$ Then, among all pairs $(i,j), i<j$ with $u$ adjacent to both $v_i$ and $v_j$ and not adjacent to $v_k$ for $i< k < j,$ consider the pair $(i,j)$ such that $j-i$ is maximal. If $j-i\geq2,$ then $G[u,\{v_k | i\leq k\leq j\}]=C_{j-i+2},$ where $5\geq j-i+2\geq 4,$ a contradiction by Lemma~\ref{cycles}. 

Thus $j-i=1.$ But $G[\{u,v_1,v_2,v_3,v_4\}]=F_1, F_2,$ or $F_3,$ all contradictions. 

So $u$ cannot be adjacent to any vertex in $\{v_2,v_3,v_4\}.$ Then $G[\{u,v_1,v_2,v_3,v_4\}]=P_5$. Since $uv_1v_2v_3v_4$ is a path of length $4$ in $G$, we take $X=\{u,v_1,v_2,v_3,v_4\}$ and proceed as in the Diameter 4 Graphs section, using Lemmas~\ref{V2 to V5}, \ref{V1}, and \ref{V0} to conclude $G\cong T(a,b)$, contradicting that $G$ has diameter $3$.

Thus the assumption that $u$ is adjacent to $v_1$ is false, so $v_1$ is a leaf. Similarly, $v_4$ is also a leaf.
\end{proof}

\begin{lemma}
\label{d3 V0-V4}
We have the following:
\begin{enumerate}[label=(\alph*)]
    \item Sets $V_3$ and $V_4$ are empty.
    \label{d3 V3-V4}
    \item For any $v \in V_2,$ $v$ must be adjacent to $x_2$ and $x_3.$ For any $u \in V_1,$ $u$ must be adjacent $x_2$ or $x_3.$
    \label{d3 V1-V2}
    \item For any $v \in V_0,$ $v$ is not adjacent to any $u\in V_1$. Further, $v$ must be adjacent to some $u\in V_2.$
    \label{d3 V0}
\end{enumerate}
\end{lemma}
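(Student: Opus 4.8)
The plan is to establish the three parts in order, since each relies on the forbidden-subgraph lemmas and the previously established structure of the diameter-3 setting. Throughout I will use the fact that $G[X]=P_4$ with $X=\{x_1,x_2,x_3,x_4\}$ and $d_{x_1x_4}=3$, together with Lemma~\ref{leaves}, which tells us $x_1$ and $x_4$ are leaves; in particular no vertex outside $X$ is adjacent to $x_1$ or $x_4$, so every vertex in $V_1\cup V_2\cup V_3\cup V_4$ attaches only to $x_2$ and/or $x_3$. This single observation immediately bounds the number of vertices of $X$ a given $v\notin X$ can touch: since $v$ can be adjacent only to $x_2$ and $x_3$, it meets at most $2$ of the vertices of $X$.

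For part~\ref{d3 V3-V4}, this observation is essentially the whole argument: any $v\in V_3\cup V_4$ would have to be adjacent to at least three vertices of $X$, but it can reach only $x_2,x_3$, so $V_3$ and $V_4$ are empty. For part~\ref{d3 V1-V2}, the same observation forces $v\in V_2$ to be adjacent to exactly $x_2$ and $x_3$ (the only two available vertices), and forces $u\in V_1$ to be adjacent to exactly one of $x_2,x_3$. I would phrase this cleanly by noting that if some $v\in V_1\cup V_2$ were adjacent to $x_1$ or $x_4$ it would violate the leaf conclusion of Lemma~\ref{leaves}, so adjacencies are confined to $\{x_2,x_3\}$, and then count.

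Part~\ref{d3 V0} is where the real work lies. First I would show no $v\in V_0$ is adjacent to any $u\in V_1$: without loss of generality $u$ is adjacent to $x_2$ (by part~\ref{d3 V1-V2}), so if $v$ is adjacent to $u$ then $v\,u\,x_2\,x_3\,x_4$ forms an induced $P_5$ (using that $v\in V_0$ is nonadjacent to all of $X$ and $u\in V_1$ is adjacent only to $x_2$ among $X$). Then exactly as in Lemma~\ref{leaves}, this $P_5$ lets us rerun the Diameter-4 machinery (Lemmas~\ref{V2 to V5}, \ref{V1}, \ref{V0}) to force $G\cong T(a,b)$, contradicting that $G$ has diameter $3$. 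The subtlety to check carefully is that $G[\{v,u,x_2,x_3,x_4\}]$ really is an induced path and not something with a chord, which follows because $u$'s only neighbor in $X$ is $x_2$ and $v$ has no neighbors in $X$ at all.

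Finally, to show every $v\in V_0$ is adjacent to some $u\in V_2$: since $G$ is connected and $v\in V_0$ is nonadjacent to $X$, there is a shortest path from $v$ to $X$, whose first step leaves $V_0$ and lands in $V_1\cup V_2$; but we have just shown $v$ cannot be adjacent to any $V_1$ vertex, and $V_3,V_4$ are empty, so that neighbor lies in $V_0\cup V_2$. The main obstacle is ruling out the possibility that $v$'s only route to $X$ passes entirely through other $V_0$ vertices, i.e. that $v$ attaches to $X$ only via a chain inside $V_0$; I would handle this by taking a $v\in V_0$ closest to $X$ (minimizing distance to $X$) and arguing its neighbor on a geodesic to $X$ cannot itself be in $V_0$ by minimality, hence lies in $V_2$, and then propagate: any $V_0$ vertex adjacent to such a vertex must again connect through $V_2$, which together with the no-$V_1$-adjacency result and Lemma~\ref{d3 forbidden} (to exclude configurations like $F_1,F_2,F_3$ arising from a $V_0$ vertex seeing a $V_2$ vertex plus $X$) forces the stated adjacency to $V_2$.
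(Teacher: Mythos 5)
Your parts (a) and (b) are correct and essentially identical to the paper's: Lemma~\ref{leaves} applied to $G[X]=P_4$ makes $x_1$ and $x_4$ leaves, which settles both at once. Your first claim in part (c) (no $V_0$--$V_1$ edges) is also correct but roundabout: instead of building an induced $P_5$ and re-running the diameter-4 machinery, you can apply Lemma~\ref{leaves} directly to the induced path $u\,x_2\,x_3\,x_4$ to conclude that $u$ itself is a leaf, hence has no neighbor besides $x_2$; this is what the paper does.

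The genuine gap is in your last step, the claim that every $v\in V_0$ is adjacent to some vertex of $V_2$. Your minimality argument only handles the $V_0$ vertex closest to $X$; for all the others you invoke a ``propagation'' whose stated justification --- Lemma~\ref{d3 forbidden}, excluding ``configurations like $F_1,F_2,F_3$ arising from a $V_0$ vertex seeing a $V_2$ vertex plus $X$'' --- does not apply. Concretely, if $v'\in V_0$ is adjacent to $v\in V_0$, and $v$ is adjacent to $u\in V_2$, the induced subgraph on $\{v',v,u,x_2,x_1\}$ is just a path: it contains no vertex adjacent to two or more vertices of a $P_4$, so none of $F_1,F_2,F_3,K_4,F_4$ occurs, and an induced path can never be outlawed this way since $P_5$ is an induced subgraph of $T(a,b)$ itself. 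The tool you actually need here is again Lemma~\ref{leaves}: $G[\{v,u,x_2,x_1\}]$ is an induced $P_4$ (because $v\in V_0$ and, by part (b), $u$'s only neighbors in $X$ are $x_2,x_3$), so $v$ must be a leaf, contradicting that $v$ has the two neighbors $v'$ and $u$. With that replacement your induction on $d(v,X)$ closes. Note also that the paper avoids the induction entirely by using the diameter-3 hypothesis, which your argument for this step never uses: since $x_4$ is a leaf, any $v\in V_0$ satisfies $d_{vx_4}=d_{vx_3}+1\geq d(v,X)+1$, so diameter 3 forces $d(v,X)=2$; the neighbor of $v$ realizing this distance lies in $V_1\cup V_2$ by part (a), and your no-$V_0$--$V_1$-edge claim then puts it in $V_2$ immediately.
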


\begin{proof}
Suppose $v\in V_4.$ Then $d_{x_1x_4}=2$ a contradiction. Thus $V_4$ is empty.

Suppose $v\in V_3.$ Then $v$ is adjacent to at least one of $x_1,x_4$. But $x_1$ and $x_4$ are both leaves by Lemma~\ref{leaves}. Thus $V_3$ is empty. This proves part~\ref{d3 V3-V4}.

Part~\ref{d3 V1-V2} follows from that $x_1$ and $x_4$ are leaves.

Now we prove part~\ref{d3 V0}. Consider $v\in V_0.$ Define $d(v,X)$ as the minimum distance from $v$ to a vertex in $X.$ 

We claim $d(v, X) < 3.$ Suppose $d(v, X) \geq 3.$ Recall that $x_4$ only has one neighbor, namely $x_3.$ Thus any path from $v$ to $x_4$ must pass through $x_3$. As usual, denote $d_{ab}$ as the distance between vertices $a$ and $b.$ We have $d_{vx_4} = d_{vx_3}+1 \geq d(v, X)+1 \geq 4,$ contradicting that $G$ has diameter $3.$

Since $v\in V_0, d(v, X)>1.$ Thus $d(v, X)=2.$ 

By part~\ref{d3 V3-V4}, $v$ is adjacent to some $u\in V_1$ or $V_2.$

Without loss of generality, $u$ is adjacent to $x_2.$ We have that $G[\{u,x_2,x_3,x_4\}]=P_4$ so $u$ is a leaf by Lemma~\ref{leaves}. So $v$ cannot be adjacent to any $u \in V_1.$ It follows that $v$ must be adjacent to some $u\in V_2.$
\end{proof}

We now know about the elements of each $V_i$ for $0\leq i \leq 4$. We move on to looking at possible edges between two elements of $\bigcup_{i=0}^4 V_i$. 

\begin{lemma}
\label{d3 edges}
We have the following:
\begin{enumerate}[label=(\alph*)]
    \item For any two $u,v \in V_2,$ $u$ is not adjacent to $v.$
    \label{d3 V2 edges}
    \item For any $u \in V_0,$ $u$ is not adjacent to any $v\in V_0.$ Further, $u$ is adjacent to exactly one $v\in V_2.$
    \label{d3 V0 edges}
    \item For any $u \in V_1$ and $v \in V_0\cup V_1\cup V_2$, $u$ is not adjacent to $v.$
    \label{d3 V1 edges}
\end{enumerate}
\end{lemma}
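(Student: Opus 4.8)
My plan is to prove the three parts in the order (a), then the ``exactly one'' half of (b), then the ``no edge'' half of (b), and finally (c), since the later parts reuse the earlier ones. Throughout I will use the standing structure from Lemmas~\ref{leaves} and \ref{d3 V0-V4}: $x_1$ and $x_4$ are leaves; each $v\in V_2$ is adjacent to exactly $x_2$ and $x_3$; each $u\in V_1$ is adjacent to exactly one of $x_2,x_3$; and each $v\in V_0$ is adjacent to neither $x_2$ nor $x_3$, lies at distance $2$ from $X$, and has a neighbor in $V_2$. Every case is closed by exhibiting an induced subgraph that is already forbidden: a cycle $C_4$ or $C_5$ (Lemma~\ref{cycles}), one of $K_4,F_2,F_3$ (Lemma~\ref{d3 forbidden}), or a small distance matrix violating Lemma~\ref{submatrix}.

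For part (a), if $u,v\in V_2$ were adjacent then $u,v,x_2,x_3$ would be pairwise adjacent (since $u,v\sim x_2,x_3$ and $x_2\sim x_3$), so $G[\{u,v,x_2,x_3\}]\cong K_4$, a contradiction. For the ``exactly one'' claim of part (b), Lemma~\ref{d3 V0-V4} already gives each $u\in V_0$ at least one neighbor $w\in V_2$; if $u$ had a second neighbor $w'\in V_2$, then $w\not\sim w'$ by part (a) and $u\not\sim x_2$, so $u\,w\,x_2\,w'$ is an induced $C_4$, contradicting Lemma~\ref{cycles}. Hence the $V_2$-neighbor is unique.

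For the ``no edge'' claim of part (b), suppose $u,v\in V_0$ are adjacent and let $w,w'$ be their unique $V_2$-neighbors. If $w\neq w'$, then $w\not\sim w'$ (part (a)), $u\not\sim w'$ and $v\not\sim w$ (uniqueness), and $u,v\not\sim x_2$, so $u\,w\,x_2\,w'\,v$ is an induced $C_5$, contradicting Lemma~\ref{cycles}. If $w=w'$, then $x_2,x_3,u,v$ are each at distance $1$ from $w$, while $d_{x_2x_3}=d_{uv}=1$ and the four remaining pairs are at distance exactly $2$ (each is non-adjacent but joined through $w$), so $G[\{w,x_2,x_3,u,v\}]$ has distance matrix
\[
\begin{pmatrix} 0 & 1 & 1 & 1 & 1 \\ 1 & 0 & 1 & 2 & 2 \\ 1 & 1 & 0 & 2 & 2 \\ 1 & 2 & 2 & 0 & 1 \\ 1 & 2 & 2 & 1 & 0 \end{pmatrix},
\]
the ``bowtie'' formed by two triangles sharing $w$. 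Its spectrum is $\left\{\tfrac{5\pm\sqrt{41}}{2},\,-1,\,-1,\,-3\right\}$, so $\lambda_4=-1\not<-1.5774$, contradicting Lemma~\ref{submatrix}. I expect this bowtie case to be the main obstacle: it is the one configuration containing no forbidden induced cycle, so it cannot be reduced to Lemma~\ref{cycles} and instead needs the eigenvalue check. The subtlety to verify is that the displayed entries really are the $G$-distances, which holds because $V_0$-membership together with $w\sim x_2,x_3$ forces each non-adjacent pair to distance exactly $2$.

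Finally, for part (c), take $u\in V_1$ with, without loss of generality, $u\sim x_2$, and suppose $u\sim v$. The case $v\in V_0$ is already excluded by Lemma~\ref{d3 V0-V4}. If $v\in V_1$ with $v\sim x_2$, then $\{u,v,x_2\}$ is a triangle to whose vertex $x_2$ the path $x_2x_3x_4$ is attached, so $G[\{u,v,x_2,x_3,x_4\}]\cong F_3$; if instead $v\sim x_3$, then $u\,x_2\,x_3\,v$ is an induced $C_4$. If $v\in V_2$, then $\{u,v,x_2\}$ is a triangle, $x_3$ is adjacent to the two triangle vertices $x_2$ and $v$ and carries the pendant $x_4$, so $G[\{u,v,x_2,x_3,x_4\}]\cong F_2$. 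Each contradicts Lemma~\ref{d3 forbidden} or \ref{cycles}, and the case $u\sim x_3$ is symmetric under $x_2\leftrightarrow x_3$, $x_1\leftrightarrow x_4$. This completes the plan.
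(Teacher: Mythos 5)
Your proof is correct, but it takes a genuinely different route from the paper's for parts (b) and (c). The paper disposes of both parts in three lines by reusing Lemma~\ref{leaves}: for (b), an element $u\in V_0$ with a $V_2$-neighbor $u'$ sits at the end of the induced path $x_1x_2u'u$, so $u$ is a leaf, which simultaneously gives uniqueness of the $V_2$-neighbor and non-adjacency to everything else; for (c), an element $u\in V_1$ adjacent to (say) $x_2$ ends the induced path $ux_2x_3x_4$, so $u$ is again a leaf. You never invoke Lemma~\ref{leaves}; instead you re-derive the same conclusions by direct forbidden-subgraph analysis (induced $C_4$, $C_5$, $K_4$, $F_2$, $F_3$), and because you establish uniqueness before non-adjacency in (b), you encounter one configuration the paper's argument never sees: two adjacent $V_0$-vertices hanging off the same hat, i.e.\ the bowtie $G[\{w,x_2,x_3,u,v\}]$, which contains no forbidden cycle and is not among the paper's named forbidden graphs, so you correctly close it with a fresh eigenvalue computation (its distance spectrum $\{\tfrac{5\pm\sqrt{41}}{2},-1,-1,-3\}$ has $\lambda_4=-1$, violating Lemma~\ref{submatrix}; I verified this). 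The trade-off: the paper's proof is shorter and adds no new computations, since Lemma~\ref{leaves} was designed precisely to encapsulate this kind of case analysis, whereas your argument is self-contained at the level of forbidden subgraphs and makes explicit which local configurations actually fail — at the cost of partially duplicating the case work already done inside the proof of Lemma~\ref{leaves} and of needing the extra bowtie check.
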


\begin{proof}
Consider $u,v \in V_2.$ Suppose $u$ is adjacent to $v.$ By Lemma~\ref{d3 V0-V4}~\ref{d3 V1-V2}, $u$ and $v$ are adjacent to both $x_2$ and $x_3.$ Then $G[\{x_2,x_3,u,v\}] = K_4,$ a contradiction. This proves part~\ref{d3 V2 edges}.

Consider $u\in V_0$. By Lemma~\ref{d3 V0-V4}~\ref{d3 V0}, $u$ is adjacent to some $u' \in V_2.$ Since $G[\{x_1,x_2,u',u\}]=P_4,$ by Lemma~\ref{leaves}, $u$ is a leaf. This proves part~\ref{d3 V0 edges}.

Consider $u \in V_1$ and $v \in V_0\cup V_1\cup V_2$. Without loss of generality $u$ is adjacent to $x_2$. We have that $G[\{u,x_2,x_3,x_4\}]=P_4$ so $u$ is a leaf by Lemma~\ref{leaves}. This proves part~\ref{d3 V1 edges}.
\end{proof}

\begin{remark}
\label{hat summary}
To summarize, we know the following about $G$. Elements of $V_1$ are adjacent to $x_2$ or $x_3$ and elements of $V_2$ are adjacent to $x_2$ and $x_3.$ In addition, each element of $V_0$ is adjacent to exactly one element of $V_2.$ Further, the vertices and edges described above are the \emph{only} possible vertices and edges of $G.$
\end{remark}

We call an element of $V_2$ a \textit{hat,} e.g., if $G$ has $3$ elements in $V_2,$ we say $G$ has $3$ hats.

\begin{lemma}
\label{six hats is illegal}
The graph $G$ has at most $5$ hats.
\end{lemma}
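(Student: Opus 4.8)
The plan is to observe that the forbidden graph $F_4$ of Lemma~\ref{d3 forbidden} is exactly the configuration formed by the backbone $x_1x_2x_3x_4$ together with six hats, so that having six or more hats would exhibit $F_4$ as an induced subgraph of $G$. I would argue by contradiction: suppose $G$ contains at least six hats, and choose any six of them, say $h_1,\dots,h_6 \in V_2$. Set $S=\{x_1,x_2,x_3,x_4,h_1,\dots,h_6\}$ and consider the induced subgraph $G[S]$.

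The first step is to identify $G[S]$ with $F_4$ under the correspondence $x_1\mapsto v_1,\ x_2\mapsto v_2,\ x_3\mapsto v_3,\ x_4\mapsto v_4,$ and $h_j\mapsto v_{j+4}$. This follows from the structural results already proved: $G[X]=P_4$ by construction and $x_1,x_4$ are leaves by Lemma~\ref{leaves}; each hat is adjacent to both $x_2$ and $x_3$ by Lemma~\ref{d3 V0-V4}~\ref{d3 V1-V2}; no two hats are adjacent by Lemma~\ref{d3 edges}~\ref{d3 V2 edges}; and since every hat lies in $V_2$ and is adjacent to exactly the two vertices $x_2,x_3$ of $X$, no hat is adjacent to $x_1$ or $x_4$. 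These incidences reproduce the edge set of $F_4$ exactly.

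Next I would check that $D_G[S]$ coincides with the matrix appearing in the $F_4$ case of Lemma~\ref{d3 forbidden}, with the free entry $a$ equal to $3$. Because $x_1$ and $x_4$ are leaves attached to $x_2$ and $x_3$ respectively and $d_{x_2x_3}=1$, every path from $x_1$ to $x_4$ has length $1+d_{x_2x_3}+1=3$, so $a=d_{x_1x_4}=3$. All remaining distances are forced: $d_{x_1h_j}=d_{x_4h_j}=2$ and $d_{h_ih_j}=2$ (each is at least $2$ by non-adjacency and at most $2$ through $x_2$), while all incidences with $x_2$ or $x_3$ equal $1$, matching the stated matrix.

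Finally, Lemma~\ref{d3 forbidden} records that for this matrix with $a=3$ one has $\lambda_2=0$, which contradicts $\lambda_2(D')<0$ from Lemma~\ref{submatrix}. Hence $G$ cannot contain six hats, so it has at most five. I expect the only delicate point to be the bookkeeping of the second and third steps---confirming that $G[S]$ is genuinely $F_4$ and that no shorter paths in $G$ alter the distance entries---since the eigenvalue obstruction itself is already packaged inside Lemma~\ref{d3 forbidden}; there is no genuine analytic difficulty once the six-hat configuration is recognized as $F_4$ with $a=3$.
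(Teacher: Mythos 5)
Your proposal is correct and follows essentially the same route as the paper: the paper's proof likewise observes that six pairwise non-adjacent hats (Lemma~\ref{d3 edges}~\ref{d3 V2 edges}) together with the backbone $x_1x_2x_3x_4$ force $F_4$ as an induced subgraph, contradicting Lemma~\ref{d3 forbidden}. Your extra steps---pinning down the distance matrix entries and re-deriving the eigenvalue obstruction with $a=3$---are sound but redundant, since Lemma~\ref{d3 forbidden} already forbids $F_4$ for every admissible value of $a$.
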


\begin{proof}
Suppose $V_2$ contains $6$ or more elements. Using Lemma~\ref{d3 edges}~\ref{d3 V2 edges}, we see that $F_4$ must be an induced subgraph of $G,$ a contradiction.
\end{proof}

\begin{lemma}
\label{zero hat}
If $G$ has no hats, then $G$ is not cospectral to $T(a,b).$
\end{lemma}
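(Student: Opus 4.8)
The plan is to show that the no-hats hypothesis rigidly determines the shape of $G$---forcing it to be a double star $S(a',b')$---and then to read off a contradiction from the multiplicity of the eigenvalue $-2$.

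First I would unwind the structure theory already established. Having no hats means $V_2=\emptyset$. By Lemma~\ref{d3 V0-V4}~\ref{d3 V0} every vertex of $V_0$ must be adjacent to some vertex of $V_2$, so $V_0$ is empty as well; combined with Lemma~\ref{d3 V0-V4}~\ref{d3 V3-V4} (which gives $V_3=V_4=\emptyset$) this forces $V\setminus X=V_1$. By Lemma~\ref{d3 V0-V4}~\ref{d3 V1-V2} each $v\in V_1$ is adjacent to exactly one of $x_2,x_3$, and by Lemma~\ref{d3 edges}~\ref{d3 V1 edges} no such vertex is adjacent to any other vertex of $V_0\cup V_1\cup V_2$, so every element of $V_1$ is a leaf. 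Since $x_1$ and $x_4$ are also leaves (hanging off $x_2$ and $x_3$ respectively) and $x_2x_3$ is the only remaining edge, $G$ consists of the edge $x_2x_3$ together with $a'\ge 1$ leaves at $x_2$ (one being $x_1$) and $b'\ge 1$ leaves at $x_3$ (one being $x_4$). That is, $G\cong S(a',b')$ in the notation of the double-star definition.

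Next I would count the multiplicity of $-2$. Writing the distance matrix of $S(a',b')$ in block form with the two centers first and the two groups of leaves last, exactly the computation in the proof of Lemma~\ref{double star polynomial} applies: the $a'-1$ differences of leaf-indicator vectors at $x_2$ together with the $b'-1$ such differences at $x_3$ are eigenvectors with eigenvalue $-2$, so the distance characteristic polynomial factors as $(-\lambda-2)^{a'+b'-2}q(\lambda)$ for a degree-$4$ polynomial $q$. In particular $-2$ occurs as a distance eigenvalue of $S(a',b')$ with multiplicity at least $a'+b'-2$. (A one-line block-determinant evaluation gives $q(-2)=-12a'b'\neq 0$, so the multiplicity is in fact exactly $a'+b'-2$, but only the lower bound is needed below.)

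Finally I would compare with $T(a,b)$. If $G\cong S(a',b')$ were cospectral to $T(a,b)$, the two graphs would have equally many vertices, forcing $a'+b'+2=a+b+3$, i.e.\ $a'+b'=a+b+1$; hence $-2$ occurs in $G$ with multiplicity at least $a'+b'-2=a+b-1$. On the other hand, the spectrum of $T(a,b)$ recorded after Lemma~\ref{double star polynomial} shows that $-2$ occurs there with multiplicity exactly $a+b-2$ (the remaining eigenvalues $\lambda_1,\lambda_2,\lambda_3,\lambda_4,\lambda_n$ lie in intervals missing $-2$; equivalently $p_{a,b}(-2)=-16ab\neq 0$ for $a,b\ge 1$). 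Since $a+b-1>a+b-2$, the two multiplicities of $-2$ disagree, contradicting cospectrality. The only real content is the structural reduction of the first paragraph; once $G$ is pinned down as a double star, the mismatch in the multiplicity of $-2$ is the crux and the remaining determinant computations are routine.
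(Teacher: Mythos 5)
Your proof is correct, but it diverges from the paper's in its second half. The structural reduction in your first paragraph (no hats $\Rightarrow V_2=\emptyset \Rightarrow V_0=\emptyset$ by Lemma~\ref{d3 V0-V4}, everything else a leaf on $x_2$ or $x_3$, hence $G\cong S(a',b')$) is exactly what the paper intends by ``$G$ must be a double star,'' and it is good that you spelled it out, since the paper leaves it implicit in Remark~\ref{hat summary}. Where you differ is the punchline: the paper simply invokes the result of Xue et al.\ that double stars are determined by their distance spectra, so a double star cospectral to $T(a,b)$ would force $T(a,b)\cong S(a',b')$, which is absurd (different diameters). You instead give a self-contained spectral argument: the $a'+b'-2$ leaf-difference eigenvectors give $-2$ multiplicity at least $a'+b'-2$ in $S(a',b')$, cospectrality forces $a'+b'+2=a+b+3$, and then the multiplicity of $-2$ in $G$ would be at least $a+b-1$, exceeding the exact multiplicity $a+b-2$ in $T(a,b)$ (exact because $p_{a,b}(-2)=-16ab\neq 0$ for $a,b\geq 1$, which your computation correctly confirms). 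Your route buys independence from the external reference at the cost of a short eigenvector computation; the paper's route is shorter but leans on \cite{xue}. Both arguments are sound, and your multiplicity count is verifiably right, including the parenthetical $q(-2)=-12a'b'$.
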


\begin{proof}
If $G$ has no hat, $G$ must be a double star. By the results in \cite{xue}, $G$ is not cospectral to $T(a,b).$
\end{proof}

\begin{lemma}
\label{1 hat}
If $G$ contains exactly 1 hat, then $G$ is not cospectral to $T(a,b)$.
\end{lemma}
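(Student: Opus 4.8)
The plan is to first use Remark~\ref{hat summary} to determine the structure of $G$ exactly, and then to read off its distance spectrum from an equitable partition of $V(G)$, comparing the result against the spectrum of $T(a,b)$. With exactly one hat $h\in V_2$, Remark~\ref{hat summary} forces $G$ to be the triangle $x_2x_3h$ together with three pendant stars: write $A$ for the set of leaves at $x_2$ (which contains $x_1$), $B$ for the leaves at $x_3$ (which contains $x_4$), and $C=V_0$ for the leaves at $h$, and set $\alpha=|A|\ge 1$, $\beta=|B|\ge 1$, $\gamma=|C|\ge 0$. These are the only vertices and edges of $G$, and since $G$ has $a+b+3$ vertices we get $\alpha+\beta+\gamma=a+b$.

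The key reduction is that the partition of $V(G)$ into the classes $\{x_2\},\{x_3\},\{h\},A,B,C$ is equitable for $D(G)$, because two leaves in the same pendant set are at mutual distance $2$ and equidistant from every other vertex. Consequently, on the subspace $W^{\perp}$ of vectors summing to zero on each class, $D(G)$ acts as $-2I$ (a one-line check using that within-class distances equal $2$), so it has $-2$ as an eigenvalue of multiplicity $\dim W^{\perp}=a+b-(\#\text{nonempty classes})$; on the complementary space $W$ of class-constant vectors, $D(G)$ is represented by the quotient matrix $Q$ whose $(i,j)$ entry is the sum of distances from a fixed vertex of class $i$ to all vertices of class $j$. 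Thus the full spectrum of $D(G)$ is $-2$ with multiplicity $\dim W^{\perp}$ together with the eigenvalues of $Q$, and $\det D(G)=(-2)^{\dim W^{\perp}}\det Q$.

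I would then split on $\gamma$. If $\gamma\ge 1$ all six classes are nonempty, so $\dim W^{\perp}=(a+b+3)-6=a+b-3$. A routine evaluation of the $6\times 6$ quotient gives $\det(Q+2I)=28\,\alpha\beta\gamma\neq 0$, so $-2$ is not an eigenvalue of $Q$ and therefore has multiplicity exactly $a+b-3$ in $D(G)$. Since Lemma~\ref{double star polynomial} shows $-2$ has multiplicity $a+b-2$ in $D(T(a,b))$, the graphs cannot be cospectral.

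The case $\gamma=0$ is the main obstacle, since there the multiplicity of $-2$ no longer distinguishes the two graphs: now only five classes remain, $\dim W^{\perp}=a+b-2$, and one checks $\det(Q_0+2I)=-4\alpha\beta\neq 0$ for the resulting $5\times 5$ quotient $Q_0$, so $-2$ has multiplicity exactly $a+b-2$, matching $T(a,b)$. To separate the graphs I would instead compare determinants. Expanding $\det Q_0$ by multilinearity in the two columns indexed by $A$ and $B$ reduces it to a few small constant determinants and yields $\det Q_0=6(\alpha+\beta)+8=6(a+b)+8$, so that $\det D(G)=(-2)^{a+b-2}\bigl(6(a+b)+8\bigr)$. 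On the other hand $\det D(T(a,b))=(-2)^{a+b-2}p_{a,b}(0)=(-2)^{a+b-2}\bigl(8(a+b)+16\bigr)$. As $8(a+b)+16-\bigl(6(a+b)+8\bigr)=2(a+b)+8>0$ for every $a+b\ge 0$, the two determinants differ, so $G$ is not cospectral to $T(a,b)$, completing the proof. The only genuinely delicate point is recognizing that the multiplicity of $-2$ fails to separate the graphs precisely when $\gamma=0$, and that a single further invariant—the determinant, i.e.\ the product of the eigenvalues—already resolves this remaining case.
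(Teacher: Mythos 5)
Your proposal is correct and is, in substance, the paper's own proof recast in equitable-partition language: your quotient matrix $Q$ is exactly the $6\times 6$ matrix whose characteristic polynomial the paper denotes $p_1(\lambda)$, your computation $\det(Q+2I)=28\alpha\beta\gamma\neq 0$ is the paper's evaluation $p_1(-2)=28a'b'c'$ forcing $c'=0$, and your determinant comparison in the $\gamma=0$ case is the paper's comparison of constant terms, since $\det Q_0=q_1(0)=8+6(\alpha+\beta)$ while $p_{a,b}(0)=16+8(a+b)$. The only real difference is packaging: justifying the spectral decomposition via the equitable partition rather than by factoring $(-\lambda-2)^{a'+b'+c'-3}$ out of the full characteristic polynomial lets you get by with scalar evaluations at $\lambda=-2$ and $\lambda=0$ instead of the full symbolic polynomials $p_1,q_1$ recorded in the appendix, but the logical skeleton of the two arguments is identical.
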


\begin{proof}
We proceed by contradiction.

Denote the vertex in $V_2$ by $v.$

Let $a'$ be the number of non-hat vertices adjacent to $x_2$ (including $x_1$), $b'$ be the number of non-hat vertices adjacent to $x_3$ (including $x_4$), and $c'$ be the number of vertices adjacent to $v$ other than $x_2$ and $x_3$.

The characteristic polynomial of $G$ is $$(-\lambda-2)^{a'+b'+c'-3}\det\begin{vmatrix}-\lambda & 1 & 1 & a' & 2b' & 2c' \\ 1 & -\lambda & 1 & 2a' & b' & 2c' \\ 1 & 1 & -\lambda & 2a' & 2b' & c' \\ 1 & 2 & 2 & 2a'-2-\lambda & 3b' & 3c' \\ 2 & 1 & 2 & 3a' & 2b'-2-\lambda & 3c' \\ 2 & 2 & 1 & 3a' & 3b' & 2c'-2-\lambda \end{vmatrix}.$$ Simplifying gives $(-\lambda -2)^{a'+b'+c'-3} p_1(\lambda),$ see Appendix. For this to be equal to the characteristic polynomial of $T(a,b)$, $p_1(\lambda)$ must be divisible by $-\lambda-2$, so $p_1(-2) = 0$. Plugging this in gives $28a'b'c' = 0$. Since $x_2$ is adjacent to $x_1$ and $x_3$ is adjacent to $x_4$, we know $a', b' \geq 1$, so $c' = 0$. We plug $c'=0$ into $p_1(\lambda)$ and divide by $-\lambda-2$ to obtain $q_1$, see Appendix. Since the number of vertices in the two graphs must be equal, we know $a'+b'+3 = a+b+3$. We know $q_1(\lambda)$ and $p(\lambda)$ are equivalent polynomials, so we set their constant terms equal and substitute in the previous equation, giving us that $a'+b' = -4$, which is clearly a contradiction.
\end{proof}

\begin{lemma}
\label{2 to 5 hats}
If $G$ contains between 2 and 5 hats inclusive, then $G$ is not cospectral to $T(a,b)$.
\end{lemma}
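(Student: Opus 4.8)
The plan is to first determine the exact shape of $G$ when it has $k$ hats with $2 \le k \le 5$, and then rule out cospectrality using only two easily computed invariants of the distance spectrum, so that no serious case analysis in $k$ is needed. I would begin by sharpening the structural picture to show that $V_0$ is empty. Suppose some hat $h$ has a pendant neighbor $p \in V_0$; by Remark~\ref{hat summary} and Lemma~\ref{d3 edges}, the vertex $p$ is adjacent to $h$ and to nothing else. Since $k \ge 2$, choose a second hat $h'$. The non-edges $ph'$, $px_2$, and $hh'$ all hold, so $G[\{p, h, x_2, h'\}]$ is the path $p - h - x_2 - h'$, i.e.\ a $P_4$. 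By Lemma~\ref{leaves} its endpoints $p$ and $h'$ are leaves, contradicting that the hat $h'$ is adjacent to both $x_2$ and $x_3$. Hence $V_0 = \emptyset$, and by Remark~\ref{hat summary} the graph $G$ is completely described: $x_2 \sim x_3$, there are $k$ hats each adjacent to both $x_2$ and $x_3$, there are $a' \ge 1$ leaves at $x_2$ (including $x_1$) and $b' \ge 1$ leaves at $x_3$ (including $x_4$), and there are no further vertices or edges.

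Next I would pass to the equitable partition of $V(G)$ into the five cells $\{x_2\}$, $\{x_3\}$, the hats, the $x_2$-leaves, and the $x_3$-leaves. Each of the three large cells has within-cell distance block of the form $2J - 2I$, so the distance characteristic polynomial of $G$ factors as $(-\lambda-2)^{k+a'+b'-3}\det(Q - \lambda I)$, where
$$Q = \begin{pmatrix} 0 & 1 & k & a' & 2b' \\ 1 & 0 & k & 2a' & b' \\ 1 & 1 & 2k-2 & 2a' & 2b' \\ 1 & 2 & 2k & 2a'-2 & 3b' \\ 2 & 1 & 2k & 3a' & 2b'-2 \end{pmatrix}.$$
Because $G$ and $T(a,b)$ have the same order, $2 + k + a' + b' = a + b + 3$, so $a + b = k + a' + b' - 1$; the exponents of $(-\lambda - 2)$ coming from $G$ and from the factorization of $T(a,b)$ in Lemma~\ref{double star polynomial} therefore coincide, and dividing them out forces the polynomial identity $\det(Q - \lambda I) = p_{a,b}(\lambda)$.

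From this identity I would extract two scalar equations. Evaluating at $\lambda = -2$ and computing both sides in closed form gives $p_{a,b}(-2) = -16ab$ and $\det(Q + 2I) = -4ka'b'$, hence $4ab = ka'b'$. Separately, cospectral graphs have equal $\sum_i \lambda_i^2$, hence equal $\sum_{i<j} d_{ij}^2$ (half the trace of $D^2$); computing this sum of squared distances for both graphs and substituting $a + b = k + a' + b' - 1$ yields $12ab - 5a'b' = 5 - 8k - 5(a'+b')$. Substituting $4ab = ka'b'$ into this relation gives
$$(3k - 5)\,a'b' = 5 - 8k - 5(a' + b').$$
For $k \ge 2$ and $a', b' \ge 1$ the left-hand side is at least $1$, while the right-hand side is at most $5 - 16 - 10 = -21$; this is a contradiction, so no such $G$ exists (and note the argument uses only $k \ge 2$, so it covers all of $2 \le k \le 5$ uniformly).

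The main obstacle is the evaluation $\det(Q + 2I) = -4ka'b'$: this is a $5 \times 5$ determinant in the parameters $k, a', b'$, and obtaining the clean closed form is where the care is needed; concretely I would use the row operations $R_4 \mapsto R_4 - R_2$ and $R_5 \mapsto R_5 - R_1$, which zero out the first two columns below the third row and reduce the computation to a few $3 \times 3$ minors. The only other computation, the sum of squared distances over all vertex pairs, is routine bookkeeping. The conceptual point that makes the argument short and uniform in $k$ is that the value at $-2$ together with the second spectral moment already force a sign contradiction, so the remaining coefficients of $p_{a,b}$ never need to be matched.
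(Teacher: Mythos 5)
Your proposal is correct, and it takes a genuinely different route from the paper's. Structurally you diverge only slightly: the paper rules out pendant vertices on hats by exhibiting a forbidden $F_2$ (Lemma~\ref{d3 forbidden}), while you get $V_0 = \emptyset$ from the induced path $p\,h\,x_2\,h'$ and Lemma~\ref{leaves}; both yield the same description of $G$, and your application of Lemma~\ref{leaves} is legitimate since that lemma applies to any induced $P_4$ in $G$. The real difference is the spectral step. The paper keeps each hat as its own row, so it computes a separate determinant for each $k = 2, 3, 4, 5$ (which is why it needs the cap from Lemma~\ref{six hats is illegal}), then divides by $(-\lambda-2)^{k-1}$ and matches constant terms to reach $a'+b' = -4$. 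You instead collapse the hat cell as well, obtaining a single $5\times 5$ quotient $Q$ uniformly in $k$ --- the same reduction the paper itself uses in Lemma~\ref{double star polynomial} --- and then extract the contradiction from just two invariants. Both of your key closed forms check out: $p_{a,b}(-2) = -16ab$, and $\det(Q+2I) = -4ka'b'$ (factor $k, a', b'$ out of the last three columns; the remaining constant $5\times5$ determinant is $-4$), giving $4ab = ka'b'$; and the second-moment computation is also right, since $\mathrm{tr}(D^2)$ equals $2(6 + 12a + 12b + 2a^2+2b^2+16ab)$ for $T(a,b)$ and $2(1 + 2k^2 + 3(a'+b') + 4k(a'+b') + 2a'^2+2b'^2+9a'b')$ for $G$, which after substituting $a+b = k+a'+b'-1$ collapses to $12ab - 5a'b' = 5 - 8k - 5(a'+b')$, hence $(3k-5)a'b' = 5 - 8k - 5(a'+b')$, whose two sides have opposite signs when $k \geq 2$ and $a', b' \geq 1$. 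What your approach buys: one computation instead of four, no need to verify divisibility by powers of $(-\lambda-2)$ or introduce the quotient polynomials $q_2, \dots, q_5$, and validity for every $k \geq 2$, so the five-hat cap (the $F_4$ argument) becomes redundant for this part of the paper. What the paper's approach buys is mainly mechanical transparency per case: each case is a direct determinant expansion plus a constant-term comparison, with all the algebra relegated to the appendix.
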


\begin{proof}
We proceed by contradiction.

First, we show no vertex in $V_2$ is adjacent to any vertices in $V_0.$

Suppose some vertex $v\in V_2$ is adjacent to some vertex in $u\in V_0.$ Then there exists some other vertex $v' \in V_2,$ distinct from $v.$ We have $G[v',x_3,v,u,x_2]=F_2,$ a contradiction by Lemma~\ref{d3 forbidden}. 

Let $a'$ be the number of non-hat vertices adjacent to $x_2$ (including $x_1$) and $b'$ be the number of non-hat vertices adjacent to $x_3$ (including $x_4$). We showed above that each vertex in $V_2$ has only $x_2$ and $x_3$ as neighbors. Denote by $n$ the number of vertices of $G$.

If $G$ has two hats, its characteristic polynomial is $$(-\lambda-2)^{a'+b'-2}\det\begin{vmatrix}-\lambda & 1 & 1 & 1 & a' & 2b' \\ 1 & -\lambda & 1 & 1 & 2a' & b' \\ 1 & 1 & -\lambda & 2 & 2a' & 2b' \\ 1 & 1 & 2 & -\lambda & 2a' & 2b' \\ 1 & 2 & 2 & 2 & 2a'-2-\lambda & 3b' \\ 2 & 1 & 2 & 2 & 3a' & 2b'-2-\lambda \end{vmatrix}.$$ Computing the determinant gives $(-\lambda-2)^{a'+b'-2} p_2(\lambda)=(-\lambda-2)^{n-6} p_2(\lambda),$ see Appendix. We equate this polynomial to the characteristic polynomial of $T(a,b),$ namely, $(-\lambda-2)^{n-5}p(\lambda).$ Then $p_2(\lambda)$ must contain a factor of $(-\lambda-2)$. Define $q_2$ such that $p_2(\lambda) = (-\lambda-2) q_2(\lambda)$. We also know that $a'+b'+4 = n = a+b+3$, so setting the constant terms of $q_2(\lambda)$ and $p(\lambda)$ equal gives us $a'+b' = -4$ again, which is a contradiction.

If $G$ has three hats, its characteristic polynomial is $$(-\lambda-2)^{a'+b'-2}\det\begin{vmatrix}-\lambda & 1 & 1 & 1 & 1 & a' & 2b' \\ 1 & -\lambda & 1 & 1 & 1 & 2a' & b' \\ 1 & 1 & -\lambda & 2 & 2 & 2a' & 2b' \\ 1 & 1 & 2 & -\lambda & 2 & 2a' & 2b' \\ 1 & 1 & 2 & 2 & -\lambda & 2a' & 2b' \\ 1 & 2 & 2 & 2 & 2 & 2a'-2-\lambda & 3b' \\ 2 & 1 & 2 & 2 & 2 & 3a' & 2b'-2-\lambda \end{vmatrix}.$$ Computing the determinant gives $(-\lambda-2)^{a'+b'-2} p_3(\lambda)=(-\lambda-2)^{n-7} p_3(\lambda),$ see Appendix. We equate this polynomial to the characteristic polynomial of $T(a,b),$ namely, $(-\lambda-2)^{n-5}p(\lambda).$ Then $p_3(\lambda)$ must contain $2$ factors of $(-\lambda-2)$. Define $q_3$ such that $p_3(\lambda) = (-\lambda-2)^2 q_3(\lambda)$. We also know that $a'+b'+5 = n = a+b+3$, so setting the constant terms of $q_3(\lambda)$ and $p(\lambda)$ equal gives us $a'+b' = -4$ again, which is a contradiction.

If $G$ has four hats, its characteristic polynomial is $$(-\lambda-2)^{a'+b'-2}\det\begin{vmatrix}-\lambda & 1 & 1 & 1 & 1 & 1 & a' & 2b' \\ 1 & -\lambda & 1 & 1 & 1 & 1 & 2a' & b' \\ 1 & 1 & -\lambda & 2 & 2 & 2 & 2a' & 2b' \\ 1 & 1 & 2 & -\lambda & 2 & 2 & 2a' & 2b' \\ 1 & 1 & 2 & 2 & -\lambda & 2 & 2a' & 2b' \\ 1 & 1 & 2 & 2 & 2 & -\lambda & 2a' & 2b' \\ 1 & 2 & 2 & 2 & 2 & 2 & 2a'-2-\lambda & 3b' \\ 2 & 1 & 2 & 2 & 2 & 2 & 3a' & 2b'-2-\lambda \end{vmatrix}.$$ Computing the determinant gives $(-\lambda-2)^{a'+b'-2} p_4(\lambda)=(-\lambda-2)^{n-8} p_4(\lambda),$ see Appendix. We equate this polynomial to the characteristic polynomial of $T(a,b),$ namely, $(-\lambda-2)^{n-5}p(\lambda).$ Then $p_4(\lambda)$ must contain $3$ factors of $(-\lambda-2)$. Define $q_4$ such that $p_4(\lambda) = (-\lambda-2)^3 q_4(\lambda)$. We also know that $a'+b'+6 = n = a+b+3$, so setting the constant terms of $q_4(\lambda)$ and $p(\lambda)$ equal gives us $a'+b' = -4$ again, which is a contradiction.

If $G$ has five hats, its characteristic polynomial is $$(-\lambda-2)^{a'+b'-2}\det\begin{vmatrix}-\lambda & 1 & 1 & 1 & 1 & 1 & 1 & a' & 2b' \\ 1 & -\lambda & 1 & 1 & 1 & 1 & 1 & 2a' & b' \\ 1 & 1 & -\lambda & 2 & 2 & 2 & 2 & 2a' & 2b' \\ 1 & 1 & 2 & -\lambda & 2 & 2 & 2 & 2a' & 2b' \\ 1 & 1 & 2 & 2 & -\lambda & 2 & 2 & 2a' & 2b' \\ 1 & 1 & 2 & 2 & 2 & -\lambda & 2 & 2a' & 2b' \\ 1 & 1 & 2 & 2 & 2 & 2 & -\lambda & 2a' & 2b' \\ 1 & 2 & 2 & 2 & 2 & 2 & 2 & 2a'-2-\lambda & 3b' \\ 2 & 1 & 2 & 2 & 2 & 2 & 2 & 3a' & 2b'-2-\lambda \end{vmatrix}.$$ Computing the determinant gives $(-\lambda-2)^{a'+b'-2} p_5(\lambda)=(-\lambda-2)^{n-9} p_5(\lambda),$ see Appendix. We equate this polynomial to the characteristic polynomial of $T(a,b),$ namely, $(-\lambda-2)^{n-5}p(\lambda).$ Then $p_5(\lambda)$ must contain $4$ factors of $(-\lambda-2)$. Define $q_5$ such that $p_5(\lambda) = (-\lambda-2)^4 q_5(\lambda)$. We also know that $a'+b'+7 = n = a+b+3$, so setting the constant terms of $q_5(\lambda)$ and $p(\lambda)$ equal gives us $a'+b' = -4$ again, which is a contradiction.
\end{proof}

\begin{theorem}
\label{d3 full}
If $G$ has diameter 3, then $G$ is not cospectral to $T(a,b).$
\end{theorem}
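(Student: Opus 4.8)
The plan is to assemble the case analysis carried out in the preceding lemmas of this section into a single exhaustive argument. Throughout we work with a graph $G$ of diameter $3$ and assume for contradiction that $G$ is cospectral to $T(a,b)$. The structural lemmas of this section have already reduced the possible form of $G$ enormously: by Lemma~\ref{d3 V0-V4} and Lemma~\ref{d3 edges}, every vertex of $G$ outside the base path $X=\{x_1,x_2,x_3,x_4\}$ lies in $V_0$, $V_1$, or $V_2$, and the only permissible edges are those summarized in Remark~\ref{hat summary}: $V_1$-vertices attach to $x_2$ or $x_3$, each hat (element of $V_2$) attaches to both $x_2$ and $x_3$, and each $V_0$-vertex attaches to exactly one hat. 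In particular the number of hats is a well-defined nonnegative integer that completely governs the gross structure of $G$.

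First I would invoke Lemma~\ref{six hats is illegal} to bound the number of hats: $G$ has at most $5$ hats. This leaves exactly six cases, indexed by the number of hats being $0,1,2,3,4,$ or $5$. Each of these cases has already been shown to be impossible: the zero-hat case by Lemma~\ref{zero hat} (where $G$ degenerates to a double star and appeals to \cite{xue}), the one-hat case by Lemma~\ref{1 hat}, and the remaining cases of two through five hats by Lemma~\ref{2 to 5 hats}. Since these six cases are exhaustive and each yields a contradiction, no graph $G$ of diameter $3$ can be cospectral to $T(a,b)$, which is precisely the statement of the theorem.

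The proof therefore consists simply of citing the case partition and the corresponding lemmas:
\begin{proof}
Assume for contradiction that $G$ has diameter $3$ and is cospectral to $T(a,b)$. By Lemma~\ref{six hats is illegal}, $G$ has at most $5$ hats, so the number of hats lies in $\{0,1,2,3,4,5\}$. If $G$ has no hats, Lemma~\ref{zero hat} gives a contradiction. If $G$ has exactly one hat, Lemma~\ref{1 hat} gives a contradiction. If $G$ has between $2$ and $5$ hats inclusive, Lemma~\ref{2 to 5 hats} gives a contradiction. These cases are exhaustive, so no such $G$ exists.
\end{proof}
Since the genuine mathematical content—the polynomial computations that force the impossible relation $a'+b'=-4$, and the forbidden-subgraph arguments that pin down the structure—has been absorbed into the earlier lemmas, the only real obstacle is ensuring the case partition is genuinely complete and that no configuration of $G$ escapes the hat-count classification. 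That completeness rests on Remark~\ref{hat summary} guaranteeing that the hat count alone determines which lemma applies, so the main point to verify is that every diameter-$3$ cospectral candidate does fall under exactly one of the six enumerated hat counts.
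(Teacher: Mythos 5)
Your proposal matches the paper's own proof essentially verbatim: both bound the hat count via Lemma~\ref{six hats is illegal} and then dispatch the cases $0$, $1$, and $2$--$5$ hats by Lemmas~\ref{zero hat}, \ref{1 hat}, and \ref{2 to 5 hats}, with exhaustiveness of the classification resting on Remark~\ref{hat summary}. The argument is correct and no further comment is needed.
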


\begin{proof}
Continuing from Remark~\ref{hat summary}, we find by Lemmas~\ref{six hats is illegal}, \ref{zero hat}, \ref{1 hat} and \ref{2 to 5 hats} that $G$ is not cospectral to $T(a,b)$ if it contains more than $5$ hats, between $2$ and $5$ hats, $1$ hat, or $0$ hats, respectively. Thus, no graph $G$ of diameter $3$ can be cospectral to $T(a,b).$
\end{proof}

\section{$T(a,b)$ is Determined by its Distance Spectrum}

For any graph $G$, denote the edge set of $G$ as $E(G)$.
We have the following result, Corollary 2.5 in \cite{xue}.

\begin{lemma}
\label{d2}
Let $G$ be a graph with order $n$ and $d(G) = 2.$ If $G'$ has the same distance spectrum as $G$, then $|E(G)| < |E(G')|$ when $d(G') \geq 3.$
\end{lemma}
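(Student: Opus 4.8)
The plan is to distill from the shared distance spectrum a single scalar invariant — the sum of squared pairwise distances — and to show that for a diameter-$2$ graph this invariant is an affine function of the edge count, whereas for a graph of larger diameter the same value can only be attained by carrying strictly more edges.

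First I would note that cospectral graphs share both the order $n$ (they have equally many eigenvalues) and the second spectral moment $\operatorname{tr}(D^2)=\sum_i\lambda_i^2$. Since $D$ is symmetric with zero diagonal, $\operatorname{tr}(D^2)=\sum_{i,j}d_{ij}^2=2\sum_{i<j}d_{ij}^2$, so the quantity $Q:=\sum_{i<j}d_{ij}^2$ is the same for $G$ and $G'$. Next I would evaluate $Q$ for the diameter-$2$ graph $G$: every pair of distinct vertices is at distance $1$ (adjacent) or $2$ (non-adjacent), whence
$$Q=|E(G)|\cdot 1+\left(\binom{n}{2}-|E(G)|\right)\cdot 4=4\binom{n}{2}-3|E(G)|.$$

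Then I would evaluate $Q$ for $G'$ through its distance distribution. Writing $m_k$ for the number of unordered vertex pairs at distance exactly $k$ in $G'$, we have $m_1=|E(G')|$, $\sum_k m_k=\binom{n}{2}$, and $Q=\sum_k k^2 m_k$. Subtracting $4\binom{n}{2}=4\sum_k m_k$ isolates the surplus:
$$Q-4\binom{n}{2}=\sum_k(k^2-4)\,m_k=-3|E(G')|+\sum_{k\geq 3}(k^2-4)\,m_k,$$
because the $k=1$ term contributes $-3m_1=-3|E(G')|$ and the $k=2$ term vanishes. Comparing with the diameter-$2$ formula, which gives $Q-4\binom{n}{2}=-3|E(G)|$, I would equate the two to obtain
$$3\big(|E(G')|-|E(G)|\big)=\sum_{k\geq 3}(k^2-4)\,m_k.$$
Since $d(G')\geq 3$ forces some $m_k$ with $k\geq 3$ to be positive, and $k^2-4\geq 5>0$ for every such $k$, the right-hand side is strictly positive, yielding $|E(G)|<|E(G')|$.

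The argument contains no genuinely difficult step; the only point demanding care is the distance-distribution bookkeeping. The key observations to get right are that the second spectral moment equals exactly $2\sum_{i<j}d_{ij}^2$ and that, after subtracting $4\binom{n}{2}$, the coefficients $k^2-4$ kill the distance-$2$ contribution and flip sign only at $k=1$, thereby isolating the strictly positive excess carried by the pairs at distance at least $3$.
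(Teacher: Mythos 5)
Your proof is correct. There is actually no in-paper argument to compare against: the paper states this lemma without proof, importing it as Corollary 2.5 of \cite{xue}. Your route --- recovering the two spectral invariants $n$ and $\operatorname{tr}(D^2)=2\sum_{i<j}d_{ij}^2$, computing $\sum_{i<j}d_{ij}^2=4\binom{n}{2}-3|E(G)|$ for the diameter-$2$ graph, and then isolating the strictly positive excess $\sum_{k\geq 3}(k^2-4)m_k\geq 5\sum_{k\geq3}m_k>0$ contributed by pairs of $G'$ at distance at least $3$ --- is the standard second-moment argument, and it is essentially how the result is established in the cited reference, so your write-up in effect makes the paper more self-contained on this point. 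The only hypothesis used implicitly is connectivity of $G'$ (so that $D(G')$ is defined), which the paper assumes of all graphs globally, so nothing is missing.
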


Finally, we have our result.

\begin{theorem}
The graph $T(a,b)$ is determined by its distance spectrum.
\end{theorem}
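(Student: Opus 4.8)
The plan is to assemble the previously established case analysis into a single dichotomy on the diameter of any hypothetical cospectral graph $G$. The overall strategy is to show that if $G$ is cospectral to $T(a,b)$, then $G$ must be isomorphic to $T(a,b)$. First I would recall that $T(a,b)$ has diameter $4$, so I must rule out every possible diameter for $G$ and show that the only surviving possibility forces $G \cong T(a,b)$. Since the distance spectrum records the number of vertices (as the matrix size $n = a+b+3$), any cospectral $G$ has the same number of vertices, and the diameter of $G$ is some integer $d(G) \geq 1$; I would organize the proof by partitioning on the value of $d(G)$.

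The key steps, in order, are as follows. By the theorem immediately preceding Lemma~\ref{d4 forbidden}, no cospectral $G$ can have diameter greater than $4$, since such a graph would contain an induced $P_6$, which is forbidden. Next, Theorem~\ref{d4 full} handles the case $d(G) = 4$: it shows directly that any diameter-$4$ graph cospectral to $T(a,b)$ satisfies $G \cong T(a,b)$. Theorem~\ref{d3 full} eliminates $d(G) = 3$ entirely, showing no diameter-$3$ graph is cospectral to $T(a,b)$. It then remains to dispose of the low-diameter cases $d(G) \in \{1,2\}$. A graph of diameter $1$ is complete, and $K_n$ has a distinct distance spectrum (indeed it is determined by its distance spectrum by \cite{lin}), so this is immediate. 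For $d(G) = 2$, I would invoke Lemma~\ref{d2}: since $T(a,b)$ itself has diameter $4 \geq 3$, applying the lemma with the roles $G \mapsto$ a diameter-$2$ graph $H$ and $G' \mapsto T(a,b)$ shows that any diameter-$2$ graph cospectral to $T(a,b)$ would satisfy a strict edge-count inequality that $T(a,b)$ violates, so no such graph exists.

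Concretely, the write-up would read: Let $G$ be any graph cospectral to $T(a,b)$; since cospectral graphs have distance matrices of the same size, $G$ has $n = a+b+3$ vertices. The diameter $d(G)$ satisfies $d(G) \leq 4$ by the theorem following Lemma~\ref{d4 forbidden}. If $d(G) = 4$, then $G \cong T(a,b)$ by Theorem~\ref{d4 full}. If $d(G) = 3$, Theorem~\ref{d3 full} gives a contradiction. If $d(G) = 2$, then since $T(a,b)$ has diameter $4 \geq 3$, Lemma~\ref{d2} (applied to the diameter-$2$ graph $G$ cospectral to $T(a,b)$, with $T(a,b)$ in the role of the higher-diameter graph) forces $|E(G)| < |E(T(a,b))|$, contradicting that cospectral diameter-$2$ and diameter-$4$ graphs cannot both exist under the lemma's edge inequality. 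Finally, if $d(G) = 1$ then $G = K_n$, whose distance spectrum is $\{n-1, (-1)^{(n-1)}\}$, which is not the spectrum of $T(a,b)$ for $a+b \geq 0$. Hence the only possibility is $G \cong T(a,b)$, so $T(a,b)$ is determined by its distance spectrum.

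The main obstacle is not a deep computation but rather the careful citation and correct orientation of Lemma~\ref{d2} for the diameter-$2$ case: the lemma is phrased with a diameter-$2$ graph as its hypothesis and concludes an edge inequality against a diameter-$\geq 3$ cospectral partner, so I must verify the quantifiers line up to genuinely exclude a diameter-$2$ cospectral graph (equivalently, one should check there is no diameter-$2$ graph whose distance spectrum matches, using that $T(a,b)$ supplies the higher-diameter witness). Everything else is a clean bookkeeping assembly of the already-proven theorems; the substantive mathematics lives in the earlier sections, and this final theorem simply confirms that those cases are jointly exhaustive and collectively pin down $G$ up to isomorphism.
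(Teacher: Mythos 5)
Your overall assembly matches the paper's proof almost step for step: bound the diameter by $4$ using the forbidden $P_6$ (Lemma~\ref{d4 forbidden}), dispose of diameter $4$ via Theorem~\ref{d4 full}, diameter $3$ via Theorem~\ref{d3 full}, and diameter $1$ trivially (the paper simply writes $d(G) > 1$; your explicit comparison with the spectrum of $K_n$ is fine). The one place where your write-up has a genuine gap is exactly the spot you flagged: the diameter-$2$ case. You correctly orient Lemma~\ref{d2} (the diameter-$2$ graph $G$ in the hypothesis role, $T(a,b)$ as the cospectral partner of diameter $\geq 3$) and obtain $|E(G)| < |E(T(a,b))|$, but your stated contradiction --- ``contradicting that cospectral diameter-$2$ and diameter-$4$ graphs cannot both exist under the lemma's edge inequality'' --- is circular: it restates the conclusion you want rather than deriving an impossibility from the inequality. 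Similarly, ``an edge-count inequality that $T(a,b)$ violates'' does not identify anything $T(a,b)$ can violate; the inequality compares $|E(G)|$ with $|E(T(a,b))|$, and distance cospectrality by itself does not force equal edge counts (the trace of $D$ and of $D^2$ do not determine $|E|$), so the strict inequality is not absurd on its face.

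The missing one-line idea, which is how the paper closes this case, is that $T(a,b)$ is a tree: it has exactly $n-1$ edges, where $n = a+b+3$. Since $G$ is connected (it must be, to have a distance matrix) and has the same number $n$ of vertices, it satisfies $|E(G)| \geq n-1 = |E(T(a,b))|$, which directly contradicts $|E(G)| < |E(T(a,b))|$. In other words, the contradiction is with the \emph{connectivity} of $G$: a diameter-$2$ cospectral mate would have to be a connected graph with strictly fewer edges than a spanning tree, which is impossible. With that sentence inserted, your proof is complete and coincides with the paper's.
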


\begin{proof}
Let $G$ be a connected graph with the same distance spectrum as $T(a,b).$ By Lemma~\ref{d4 forbidden}, $P_6$ is a forbidden subgraph of $G$, so $d(G) \leq 4$. Clearly, $d(G) > 1$. Suppose that $d(G) = 2.$ By Lemma~\ref{d2}, we have $|E(G)| < |E(T(a, b))|$, which contradicts the connectivity of $G$. 

Next, Theorem~\ref{d3 full} gives that $d(G)\neq 3.$ So $d(G)=4,$ and by Theorem~\ref{d4 full}, $G\cong T(a,b).$
\end{proof}

\begin{section}{Acknowledgements}
This paper was made possible through the MIT PRIMES program. The project was mentored under Feng Gui, a graduate student studying mathematics at MIT. We would like to thank PRIMES for the opportunity to conduct this research, and Feng Gui for his support in our project. In addition, we would like to specifically thank Kent Yashaw for his helpful feedback on the paper. 
\end{section}

\section*{Appendix}

\vspace{-5mm}

$$p_1(\lambda) = -16 - 12 a' - 12 b' - 12 c' - 48 \lambda - 52 a' \lambda - 52 b' \lambda - 12 a' b' \lambda - 52 c' \lambda - 12 a' c' \lambda - 12 b' c' \lambda- 48 \lambda^2 - 79 a' \lambda^2 - 79 b' \lambda^2$$ $$ - 30 a' b' \lambda^2- 79 c' \lambda^2 - 30 a' c' \lambda^2 - 30 b' c' \lambda^2 - 9 a' b' c' \lambda^2 - 12 \lambda^3 - 54 a' \lambda^3 - 54 b' \lambda^3 - 22 a' b' \lambda^3 - 54 c' \lambda^3 - 22 a' c' \lambda^3 - 22 b' c' \lambda^3$$ $$ - 8 a' b' c' \lambda^3 + 9 \lambda^4- 17 a' \lambda^4 - 17 b' \lambda^4 - 5 a' b' \lambda^4 - 17 c' \lambda^4 - 5 a' c' \lambda^4 - 5 b' c' \lambda^4 + 6 \lambda^5 - 2 a' \lambda^5 - 2 b' \lambda^5 - 2 c' \lambda^5 + \lambda^6.$$
$$q_1(\lambda) = 8 + 6 a' + 6 b' + (20 + 23 a' + 23 b' + 6 a' b') \lambda + (14 + 28 a' + 28 b' + 12 a' b') \lambda^2 + (-1 + 13 a' + 13 b' + 5 a' b') \lambda^3 $$ $$ + (-4 + 2 a' + 2 b') \lambda^4 - \lambda^5.$$

$$p_2(\lambda) = -16 - 8 a' - 8 b' - 64 \lambda - 40 a' \lambda - 40 b' \lambda - 8 a' b' \lambda - 88 \lambda^2 - 
 70 a' \lambda^2 - 70 b' \lambda^2 - 24 a' b' \lambda^2 - 48 \lambda^3 - 52 a' \lambda^3 - 52 b' \lambda^3 - $$ $$
 20 a' b' \lambda^3 - 5 \lambda^4 - 17 a' \lambda^4 - 17 b' \lambda^4 - 5 a' b' \lambda^4 + 4 \lambda^5 - 
 2 a' \lambda^5 - 2 b' \lambda^5 + \lambda^6.$$
$$q_2(\lambda) = 8 + 4 a' + 4 b' + (28 + 18 a' + 18 b' + 4 a' b') \lambda + (30 + 26 a' + 26 b' + 10 a' b') \lambda^2 + (9 + 13 a' + 13 b' + 5 a' b') \lambda^3 $$ $$+ (-2 + 2 a' + 2 b') \lambda^4 - \lambda^5.$$

$$p_3(\lambda) = 32 + 8 a' + 8 b' + 176 \lambda + 60 a' \lambda + 60 b' \lambda + 8 a' b' \lambda + 336 \lambda^2 + 
 150 a' \lambda^2 + 150 b' \lambda^2 + 40 a' b' \lambda^2 + 296 \lambda^3 + 161 a' \lambda^3  $$ $$ + 
 161 b' \lambda^3 + 54 a' b' \lambda^3  + 122 \lambda^4 + 84 a' \lambda^4 + 84 b' \lambda^4 + 
 28 a' b' \lambda^4 + 15 \lambda^5 + 21 a' \lambda^5 + 21 b' \lambda^5 + 5 a' b' \lambda^5 - 4 \lambda^6 + 
 2 a' \lambda^6 + 2 b' \lambda^6 - \lambda^7.$$
$$q_3(\lambda) = 8 + 2 a' + 2 b' + (36 + 13 a' + 13 b' + 2 a' b') \lambda + (46 + 24 a' + 24 b' + 8 a' b') \lambda^2 + (19 + 13 a' + 13 b' + 5 a' b') \lambda^3 $$$$+ (2 a' + 2 b') \lambda^4 - \lambda^5.$$
\newpage
$$p_4(\lambda) = -64 - 448 \lambda - 64 a' \lambda - 64 b' \lambda - 1072 \lambda^2 - 272 a' \lambda^2 - 272 b' \lambda^2 - 48 a' b' \lambda^2 - 1248 \lambda^3 - 416 a' \lambda^3 - 416 b' \lambda^3 - 112 a' b' \lambda^3$$ $$
 - 780 \lambda^4 - 312 a' \lambda^4 - 312 b' \lambda^4 - 96 a' b' \lambda^4 - 252 \lambda^5 - 124 a' \lambda^5 - 
 124 b' \lambda^5 - 36 a' b' \lambda^5 - 29 \lambda^6 - 25 a' \lambda^6 - 25 b' \lambda^6 - 5 a'b' \lambda^6 + 
 4 \lambda^7 $$$$ - 2 a' \lambda^7 - 2 b' \lambda^7 + \lambda^8.$$
$$q_4(\lambda) = 8 + (44 + 8 a' + 8 b') \lambda + (62 + 22 a' + 22 b' + 6 a' b') \lambda^2 + (29 + 13 a' + 13 b' + 5 a' b') \lambda^3 + (2 + 2 a' + 2 b') \lambda^4 - \lambda^5.$$

$$p_5(\lambda) = 128 - 32 a' - 32 b' + 1088 \lambda - 16 a' \lambda - 16 b' \lambda - 32 a' b' \lambda + 3104 \lambda^2 + 
 368 a' \lambda^2 + 368 b' \lambda^2 + 4432 \lambda^3 + 904 a' \lambda^3$$$$ + 904 b' \lambda^3 + 
 160 a' b' \lambda^3 + 3608 \lambda^4 + 950 a' \lambda^4 + 950 b' \lambda^4 + 240 a' b' \lambda^4 +
 1724 \lambda^5 + 539 a' \lambda^5 + 539 b' \lambda^5 + 150 a' b' \lambda^5 + 454 \lambda^6 $$$$ + 
 172 a' \lambda^6 + 172 b' \lambda^6 + 44 a' b' \lambda^6 + 47 \lambda^7 + 29 a' \lambda^7 + 29 b' \lambda^7 + 
 5 a' b' \lambda^7 - 4 \lambda^8 + 2 a' \lambda^8 + 2 b' \lambda^8 - \lambda^9.$$
$$q_5(\lambda) = 8 - 2 a' - 2 b' + (52 + 3 a' + 3 b' - 2 a' b') \lambda + (78 + 20 a' + 20 b' + 4 a' b') \lambda^2 + (39 + 13 a' + 13 b' + 5 a' b') \lambda^3 $$$$ + (4 + 2 a' + 2 b') \lambda^4 - \lambda^5.$$

\end{document}